\documentclass[a4paper]{amsart}

\usepackage[utf8]{inputenc}
\usepackage[T1]{fontenc}

\usepackage{amsthm, amssymb, amsmath, amsfonts, mathrsfs}

\usepackage[ocgcolorlinks, colorlinks=true, pdfstartview=FitV, linkcolor=blue, citecolor=blue, urlcolor=blue, pagebackref=false]{hyperref}

\usepackage{microtype}

%
%
%

\newtheorem{thm}{Theorem}[section]
\newtheorem{prop}[thm]{Proposition}
\newtheorem{lem}[thm]{Lemma}

\theoremstyle{remark}
\newtheorem{rem}[thm]{Remark}

\def\les{\lesssim}

\renewcommand{\subset}{\subseteq}

\newcommand{\E}{\mathbb{E}}

\newcommand{\cE}{\mathcal{E}}
\newcommand{\cG}{\mathcal{G}}

\newcommand{\M}{\mathcal{M}}
\newcommand{\W}{\mathcal{W}}

\newcommand{\R}{\mathbb{R}}
\newcommand{\C}{\mathcal{C}}

\renewcommand{\P}{\mathbb{P}}

\newcommand{\eps}{\varepsilon}

\numberwithin{equation}{section}

\title[Fluctuations in stochastic homogenization]{A central limit theorem for fluctuations in one dimensional stochastic homogenization}
\author{Yu Gu}

\address[Yu Gu]{Department of Mathematics, Building 380, Stanford University, Stanford, CA, 94305, USA}

\begin{document}
\begin{abstract}

In this paper, we analyze the random fluctuations in a one dimensional stochastic homogenization problem and prove a central limit result, i.e., the first order fluctuations can be described by a Gaussian process that solves an SPDE with additive spatial white noise. Using a probabilistic approach, we obtain a precise error decomposition up to the first order, which helps to decompose the limiting Gaussian process, with one of the components corresponding to the corrector obtained by a formal two scale expansion.

\bigskip

\noindent \textsc{MSC 2010:} 35B27, 35K05, 60G44, 60F05, 60K37.

\medskip

\noindent \textsc{Keywords:}  stochastic homogenization, central limit theorem, diffusion in random environment
\end{abstract}
\maketitle
%
%
%
%
%
%
%
%
\section{Introduction}

The equation we are interested in is 
\begin{equation}
\partial_t u_\eps=\frac12\partial_x \tilde{a}(\frac{x}{\eps},\omega)\partial_x u_\eps, \ \ t>0, x\in \R,
\label{eq:mainEq}
\end{equation}
with an initial condition $u_\eps(0,x)=f(x)\in \C_c^\infty(\R)$. Here $\tilde{a}$ is a smooth stationary random field defined on some probability space $(\Omega,\mathcal{F},\P)$, and satisfies
\[
\lambda \leq \tilde{a}(x,\omega)\leq 1
\]
for some $\lambda\in (0,1)$ and $x\in \R, \omega\in \Omega$. The standard homogenization result shows that $u_\eps\to u_{\hom}$ with 
\begin{equation}
\partial_t u_{\hom}=\frac12\bar{a}\partial_x^2u_{\hom},
\label{eq:homoEq}
\end{equation}
and the homogenization constant $\bar{a}$ is the harmonic mean of $\tilde{a}$:
\[
\bar{a}^{-1}=\E\{\tilde{a}^{-1}\}.
\]


The goal is to analyze the first order fluctuations, i.e., if the homogenization is viewed as a law of large numbers type result, we are interested in a central limit theorem (CLT) here. The same question has been addressed for the boundary value problem 
\begin{equation}
\label{eq:bvp}
-\frac{d}{dx}\tilde{a}(\frac{x}{\eps},\omega)\frac{d}{dx}u_\eps=f, \mbox{ with } x\in (0,1),
\end{equation}
and $u_\eps(0)=0, u_\eps(1)=1$, under different assumptions on the correlation properties of $\tilde{a}$ 
\cite{BP-AA-99,BGMP-AA-08,GB-JDE-12}. It was shown in \cite[Theorem 3.1]{BP-AA-99} that if $\tilde{a}$ satisfies certain mixing assumption, a CLT holds:
\[
\frac{u_\eps(x)-u_{\hom}(x)}{\sqrt{\eps}}\Rightarrow \int_0^1 F(x,y)dB_y
\]
in $\C([0,1])$, where $F(x,y)$ is deterministic and $B_y$ is a standard Brownian motion. The analysis used the explicit solution to \eqref{eq:bvp} and reduced the problem to the weak convergence of oscillatory random integrals. We ``revisit'' the problem on the whole space for the following reasons:

(i) It seems the approaches in \cite{BP-AA-99,BGMP-AA-08,GB-JDE-12} fails to work for \eqref{eq:mainEq} since we do not have an explicit solution when the problem is posed on the whole space, so a different method needs to be developed.

(ii) It is not clear how the boundary condition affects the asymptotic behavior of the rescaled fluctuations in \eqref{eq:bvp}, and the Wiener integral $\int_0^1 F(x,y)dB_y$ is not naturally related to the corrector obtained through the formal two scale expansion that is used extensively in homogenization. In this paper we are looking for an error decomposition that is in parallel to the formal expansion and also indicates clearly how each term contributes to the limiting Gaussian distribution.

(iii) It was shown recently when $d\geq 3$ that a pointwise formal two scale expansion holds \cite[Theorem 2.3]{gu2014pointwise}: for fixed $(t,x)$, 
\begin{equation}
u_\eps(x)=u_{\hom}(x)+\eps \nabla u_{\hom}(x)\cdot \tilde{\phi}(\frac{x}{\eps})+o(\eps),
\label{eq:loflu}
\end{equation}
where $\tilde{\phi}$ is the mean-zero stationary corrector and $o(\eps)/\eps\to 0$ in $L^1(\Omega)$. It can be seen from \eqref{eq:loflu} that the \emph{local} (pointwise) fluctuation is not necessarily Gaussian since $\tilde{\phi}$ is not Gaussian. On the other hand, the large scale central limit results are proved for the \emph{global} fluctuations of the solutions and correctors \cite{gu2015scaling,mourrat2015scaling}
\begin{eqnarray}
&&\frac{1}{\eps^{d/2}}\int_{\R^d}(u_\eps(x)-\E\{u_\eps(x)\})g(x)dx\Rightarrow N(0,\sigma_1^2),\\
&&\frac{1}{\eps^{d/2}}\int_{\R^d}\eps \nabla u_{\hom}(x)\cdot \tilde{\phi}(\frac{x}{\eps})g(x)dx\Rightarrow N(0,\sigma_2^2).
\end{eqnarray}
It turns out that $\sigma_1\neq \sigma_2$, so the corrector which represents the local fluctuation does not capture the global fluctuation! Mathematically it is not surprising since the $o(\eps)$ in \eqref{eq:loflu} could contribute on the level of $\eps^{d/2}$ when $d\geq 3$. From a practical point of view, it is important that we can extract the right term from $o(\eps)$ such that together with the corrector they represent the global fluctuations of the solutions. To understand the mechanism better, we start from the simpler setting $d=1$, where the local and global fluctuations are known to be described by a single Gaussian field on the level of $\sqrt{\eps}$. We expect the error decomposition and its relation to the corrector in low dimension to shed light on the situation in high dimensions.

\vskip 5mm

Quantitative aspects of stochastic homogenization of divergence form operator has witnessed a lot of progress recently, from both analytic and probabilistic points of view \cite{GO-AP-11,gloria2013quantification,gloria2014optimal,gloria2014quantitative,mourrat2014kantorovich}. Our approach falls into the more probabilistic side: we use the probabilistic representation of the solutions to \eqref{eq:mainEq} and quantify the weak convergence of an underlying diffusion in random environment. The main ingredients in our analysis consist of the Kipnis-Varadhan's method \cite{KV-CMP-86,KLO-SP-12} applied to reversible diffusion in random environment and the quantitative martingale CLT developed in \cite{mourrat2014kantorovich,gu2015fluctuations} to extract the first order error in the martingale convergence. We also rely heavily on the work \cite{iftimie2008homogenization}, where the authors analyzed the asymptotics of 
\begin{equation}
\partial_t u_\eps=\frac12\partial_x \tilde{a}(\frac{x}{\eps},\omega)\partial_x u_\eps+\frac{1}{\sqrt{\eps}}\tilde{c}(\frac{x}{\eps},\omega)u_\eps,
\label{eq:ipp}
\end{equation}
i.e., \eqref{eq:mainEq} with a large highly oscillatory random potential. It turns out that a part of the error in our analysis of $u_\eps-u_{\hom}$ solves \eqref{eq:ipp} with an additive rather than multiplicative potential. By following a similar argument, we obtain a limiting SPDE (for this part of the error) with additive white noise (which is a Gaussian process), while the limit of \eqref{eq:ipp} is an equation with multiplicative white noise. 

The Kipnis-Varadhan's method decomposes the underlying diffusion process as a small remainder plus a martingale which converges to the limit. One of our main contributions is to combine the errors coming from the remainder and the martingale convergence, which are three Gaussian processes, and write the sum as the solution to one SPDE with additive noise. On one hand, this justifies rigorously in the simpler setting $d=1$ the heuristics presented in \cite{gu2015scaling}. More precisely, if we assume that in \eqref{eq:mainEq}, the fluctuations of the coefficient $\tilde{a}$ around its homogenization limit $\bar{a}$ can be described by some mean zero, strongly mixing processes, denoted by $\tilde{V}$ in \eqref{eq:defv} below, and on the large scale, $\tilde{V}$ can be replaced by some spatial white noise $\dot{\W}$ (after proper rescaling), then \eqref{eq:mainEq} may be rewritten as
\begin{equation}
\partial_t u_\eps\approx \frac12\partial_x(\bar{a}+\sqrt{\eps}\dot{\W})\partial_xu_\eps,
\end{equation}
so the error $v_\eps=\eps^{-1/2}(u_\eps-u_{\hom})$ should solve
\begin{equation}
\partial_t v_\eps\approx\frac12\bar{a}\partial_x^2 v_\eps+\frac12\partial_x(\partial_xu_{\hom}\dot{\W}).
\label{eq:heueq}
\end{equation}
Indeed, we obtain the limiting fluctuation described by an equation of the form \eqref{eq:heueq} in Theorem~\ref{t:mainth} below. For more detailed discussions of the heuristics in high dimensions, we refer to \cite[Section 1]{gu2015scaling}. On the other hand, the three Gaussian processes obtained in the limit provide a natural decomposition of the limiting SPDE (on the level of equations), which is in parallel to the martingale decomposition of the underlying diffusion in random environment (on the level of  stochastic processes), and this helps us to see the role played by the corrector more clearly. It is not clear whether \eqref{eq:heueq} can be obtained through some PDE arguments.

\vskip 5mm

Here are some notations used throughout the paper. The expectation in $(\Omega,\mathcal{F},\P)$ is denoted by $\E$. When averaging with respect to some independent Brownian motion $B,W$, we use $\E_B,\E_W$. The normal distribution with mean $\mu$ and variance $\sigma^2$ is denoted by $N(\mu,\sigma^2)$, and the density function of $N(0,t)$ is denoted by $q_t(x)=(\sqrt{2\pi t})^{-1}e^{-|x|^2/2t}$. We write $a\les b$ when $a\leq Cb$ for some constant $C$ independent of $t,x,\eps$.

The rest of the paper is organized as follows. We present the setup and main results in Section~\ref{s:re}. The error decomposition is discussed in Section~\ref{s:err}, and weak convergence results are obtained in Section~\ref{s:wc}. In Section~\ref{s:dis} we finish the proof of the main result and compare it with high dimensions $d\geq 3$. Some technical lemmas are left in Section~\ref{s:lem}.

\section{Setup and main results}
\label{s:re}

We first assume there is a group of measure-preserving, ergodic transformation $\{\tau_x,x\in\R\}$ associated with the probability space $(\Omega,\mathcal{F},\P)$, then the coefficient field $\tilde{a}(x,\omega)$ is defined by
\[
\tilde{a}(x,\omega)=a(\tau_x\omega)
\]
 for some $a:\Omega\to [\lambda,1]$. We further assume it is smooth and of finite range dependence:
 
 (i) $\tilde{a}(x,\omega)$ has $\C^2$ sample paths whose first and second order derivatives are uniformly bounded in $(x,\omega)$.
 
(ii) For any two sets $A,B\subset \R$, if $\mathrm{dist}(A,B)\geq 1$, then $\mathcal{F}_A=\sigma(\tilde{a}(x,\omega): x\in A)$ is independent of $\mathcal{F}_B=\sigma(\tilde{a}(x,\omega):x\in B)$.

\begin{rem}
The finite range of dependence can be replaced by some mixing condition, e.g., the $\phi-$mixing used in \cite{iftimie2008homogenization}.
\end{rem}

Besides the coefficient field $\tilde{a}(x,\omega)$, another important random field in our analysis is 
\begin{equation}
\tilde{V}(x,\omega)=\frac{1}{\tilde{a}(x,\omega)}-\frac{1}{\bar{a}},
\label{eq:defv}
\end{equation}
which may be seen as the fluctuations of the homogenization constant. It is clear that $\tilde{V}$ is of finite range dependence, and its covariance function is given by $R(x)=\E\{\tilde{V}(0,\omega)\tilde{V}(x,\omega)\}$ and the power spectrum is 
\begin{equation}
\hat{R}(\xi)=\int_\R R(x)e^{-i\xi x}dx.
\end{equation}


The following is our main result:
\begin{thm}
Let $v_\eps=\eps^{-1/2}(u_\eps-u_{\hom})$ and $v$ solves
\begin{equation}
\partial_t v=\frac12\bar{a}\partial_x^2 v-\frac12\hat{R}(0)^\frac12\bar{a}^2 \partial_x (\partial_x u_{\hom}\dot{\W}), \mbox{ with } v(0,x)=0,
\label{eq:limitspde}
\end{equation}
where $\dot{\W}$ is spatial white noise, then as $\eps\to 0$, $v_\eps\Rightarrow v$ in the following sense:

(i) as a process in $(t,x)$, the finite dimensional distributions converge,

(ii) for any test function $g\in \C_c^\infty(\R)$, $\int_\R v_\eps(t,x)g(x)dx\Rightarrow \int_\R v(t,x)g(x)dx$ in distribution.
\label{t:mainth}
\end{thm}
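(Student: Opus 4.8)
The plan is to use the probabilistic representation of $u_\eps$ via a diffusion in random environment and extract the first order error through a Kipnis--Varadhan decomposition. Write $u_\eps(t,x) = \EE_x^\omega\{f(X_t^\eps)\}$, where $X_t^\eps$ is the diffusion generated by $\tfrac12 \partial_x \tilde a(x/\eps,\omega)\partial_x$; after a scaling change of variables this is essentially $\eps$ times a diffusion $Y$ generated by $\tfrac12 \partial_x \tilde a(x,\omega)\partial_x$ started from $x/\eps$ and run for time $t/\eps^2$. The classical one-dimensional theory shows $Y$ satisfies an invariance principle with limiting variance $\bar a$; the point here is to quantify the error. I would first use the explicit one-dimensional structure: $Y$ is a time-change of Brownian motion through the scale function $S(y) = \int_0^y \tilde a(z,\omega)^{-1}\,\mathrm{d}z$ and speed measure, so that $S(Y_t)$ is a martingale. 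This gives a clean martingale-plus-remainder decomposition where the remainder involves the corrector $\tilde\phi$ (an antiderivative of $\tilde V$, suitably centered), matching the formal two-scale expansion term $\eps \partial_x u_{\hom}\cdot\tilde\phi(x/\eps)$.

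Second, I would write $v_\eps = \eps^{-1/2}(u_\eps - u_{\hom})$ as a sum of three pieces coming from (a) the additive corrector/remainder term from the scale function, (b) the quadratic-variation error in the martingale CLT — i.e. the discrepancy between the random clock $\int_0^{t/\eps^2} \tilde a(Y_s,\omega)^{-1}\,\mathrm{d}s$ and its deterministic limit $\bar a^{-1} t/\eps^2$, which by the ergodic theorem fluctuates on scale $\eps$ — and (c) lower order terms. Each of the first two, after multiplication by $\eps^{-1/2}$, is an oscillatory functional of $\tilde V$ integrated against a smooth kernel (the heat kernel $q_{\bar a t}$ and its derivatives applied to $f$), so by the finite-range dependence the relevant object is $\eps^{-1/2}\int \tilde V(y/\eps,\omega)\,\psi(y)\,\mathrm{d}y$ for various smooth $\psi$, which converges by a CLT for mixing fields to a Gaussian with variance $\hat R(0)\int \psi^2$; the covariance structure across different $\psi$'s is likewise controlled by $\hat R(0)$. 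Collecting the three Gaussian limits, the total covariance is exactly that of the mild solution $v$ of \eqref{eq:limitspde}, which one checks by Duhamel's formula: $v(t,x) = -\tfrac12 \hat R(0)^{1/2}\bar a^2 \int_0^t\!\!\int_\R \partial_y q_{\bar a(t-s)}(x-y)\,\partial_y u_{\hom}(s,y)\,\dot{\W}(\mathrm{d}y)\,\mathrm{d}s$, so its covariance is a double heat-kernel integral against $\int (\partial_y u_{\hom})^2$, and the constant $\hat R(0)^{1/2}\bar a^2$ is arranged precisely so that the variances match.

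For part (i), convergence of finite-dimensional distributions in $(t,x)$, I would prove joint convergence of the vector $(\eps^{-1/2}\int \tilde V(y/\eps)\psi_{t_i,x_i,k}(y)\,\mathrm{d}y)$ over finitely many space-time points and the finitely many kernel types $k$, via a multivariate CLT for functionals of finite-range fields (Lindeberg/Stein or a martingale CLT on a blocked sum), together with $L^2$ bounds showing the $\eps^{-1/2}$-rescaled remainder and all $o(\eps)$ contributions vanish. Part (ii), the spatially smeared statement, follows by integrating the same representation against $g$ and using Fubini to reduce to the same class of oscillatory integrals. The quantitative inputs — the Kipnis--Varadhan remainder estimate and the quantitative martingale CLT of \cite{mourrat2014kantorovich,gu2015fluctuations} — guarantee the error terms are genuinely $o(\eps^{1/2})$ in $L^2(\Omega)$ uniformly on compact $(t,x)$ sets.

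The main obstacle I expect is step two: disentangling the contributions of the scale-function remainder and the quadratic-variation fluctuation so that they are expressed in terms of the \emph{same} noise $\dot{\W}$ with the \emph{correct} cross-covariance. Naively these look like independent sources of randomness, but both are driven by $\tilde V$ on the macroscopic scale, so their joint limit is degenerate and the cross terms must be tracked carefully; getting the algebra to collapse to the single coefficient $-\tfrac12\hat R(0)^{1/2}\bar a^2$ in \eqref{eq:limitspde}, rather than a sum of several uncorrelated Gaussian fields, is the crux — and it is exactly the point the introduction advertises as combining three Gaussian processes into one SPDE. A secondary technical difficulty is controlling the diffusion's excursions uniformly in $\omega$ (so that the martingale CLT applies with the needed rate) and handling the unboundedness of $\R$, which I would manage using the compact support of $f$ and Gaussian tail bounds on $X_t^\eps$.
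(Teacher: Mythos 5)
Your high-level program matches the paper's: probabilistic representation, corrector-based martingale decomposition of $\eps X_{t/\eps^2}$, Taylor expansion plus quantitative martingale CLT to isolate $v_{1,\eps}$ and $v_{2,\eps}$, and finally a Duhamel/covariance matching to identify the limit as the mild solution of \eqref{eq:limitspde}. But there is a genuine gap in the middle, and you have put your finger on it yourself without resolving it.

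The gap is your claim that each of the first-order pieces ``is an oscillatory functional of $\tilde V$ integrated against a smooth kernel,'' so that everything reduces to $\eps^{-1/2}\int \tilde V(y/\eps)\psi(y)\,dy$ with $\psi$ deterministic and smooth, whence a standard CLT for mixing fields applies. This reduction is not valid as stated. The quantities $v_{1,\eps}=\E_B\{f'(x+M_t^\eps)R_t^\eps\}$ and $v_{2,\eps}=\tfrac12\E_B\{f''(x+M_t^\eps)(\la M^\eps\ra_t-\bar a t)\}$ do not separate into $\tilde V$ times a deterministic kernel: after taking $\E_B$, the kernel you would integrate $\tilde\phi(\cdot/\eps)$ or $\tilde V(\cdot/\eps)$ against is itself a functional of $\omega$, through the density of $M_t^\eps$ (respectively through the occupation measure of $\eps X_{s/\eps^2}$), and this $\omega$-dependence is not of lower order. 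If you try to remove it by invoking the quantitative martingale CLT to replace $M_t^\eps$ by $\bar\sigma W_t$ \emph{inside} the expectation against $\tilde\phi(\cdot/\eps)$, the error bound involves derivatives of the test function $y\mapsto f'(x+y)\tilde\phi(y/\eps)$, which blow up like $\eps^{-1}$; so the quantitative CLT cannot be applied with that test function. This is not a technicality one can safely wave at with ``by finite range the relevant object is an oscillatory integral'' --- it is exactly the place where a new idea is needed.

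The paper's resolution, taken from Iftimie--Pardoux--Piatnitski, is to decompose the probability space $\Omega$: since $\W_\eps(\cdot)=\sqrt\eps\,\tilde\phi(\cdot/\eps)/\bar c$ is tight in $\C(\R)$, for any $\delta>0$ one covers a set of $\P$-measure $\ge 1-\delta$ by finitely many events $B_k^{\delta,\eps}$ on which $\W_\eps$ is within $\delta$ of a fixed deterministic $g_k$ on $[-\delta^{-1},\delta^{-1}]$. On each $B_k^{\delta,\eps}$ one replaces $\W_\eps$ by $g_k$ with controllable error, and then the only remaining $\omega$-dependence is in $M_t^\eps$ and $\eps X_{t/\eps^2}$, for which the quenched invariance principle applies almost surely; the environment randomness re-enters only through $1_{B_k^{\delta,\eps}}\Rightarrow 1_{B_k^{\delta}}$. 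This single device also answers your ``crux'': all three limiting Gaussian pieces are expressed through the \emph{same} $\W_\eps\Rightarrow\W$, so the joint law and cross-covariances come for free, and the identity (an It\^o-formula computation, \cite[Lemma 3.12]{iftimie2008homogenization}) collapses the sum of $v_1,v_2,v_3$ into the single SPDE \eqref{eq:limitspde}. Without the $\Omega$-decomposition you have no concrete mechanism to carry out either the marginal convergences rigorously or the joint convergence you correctly flag as essential.

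Two smaller remarks. Your scale-function/time-change starting point is a legitimate 1D alternative to the paper's corrector-based Kipnis--Varadhan decomposition and would yield an equivalent splitting, so that part is fine. Second, the bound for $v_{2,\eps}$ is not immediate from the ergodic theorem; the paper introduces a second corrector $\tilde\psi$ solving $-\tfrac12(\tilde a\tilde\psi')'=\tilde V$ to decompose $\la M^\eps\ra_t-\bar a t$ itself into a boundary term plus a martingale (Lemma~\ref{lem:masm} and the subsequent joint-convergence Lemma~\ref{lem:conma}); a plan that treats the clock fluctuation by ``a CLT for mixing fields'' alone will not produce the joint convergence with $M_t^\eps$ that the identification of $v_3$ requires.
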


It turns out the Gaussian process $v(t,x)$ is a superposition of three Gaussian processes, and one of them takes the form $\partial_x u_{\hom}(t,x)\W(x)$, which corresponds to the corrector obtained through a formal two scale expansion. Here 
\[
\W(x):=\int_0^x \dot{W}(y)dy
\]
 is a two-sided Brownian motion.

\subsection{Diffusion in random environment}

Our starting point to prove Theorem~\ref{t:mainth} is a probabilistic representation. For every fixed $\omega\in\Omega, x\in\R$ and $\eps>0$, we define the underlying diffusion in random environment by the It\^o's SDE:
\begin{equation}
dX_t=\tilde{b}(X_t,\omega)dt+\tilde{\sigma}(X_t,\omega)dB_t, \mbox{ with } X_0=x/\eps,
\label{eq:sde}
\end{equation}
where 
\[
\tilde{b}(x,\omega)=\frac12\tilde{a}'(x,\omega), \mbox{  } \tilde{\sigma}(x,\omega)=\tilde{a}^\frac12(x,\omega).
\]
The driving Brownian motion $B_t$ is built on another probability space $(\Sigma,\mathcal{A},\P_B)$. It is straightforward to check that $\eps X_{t/\eps^2}$ is a Markov process starting from $x$ with the generator $L^\omega=\frac12\partial_x\tilde{a}(x/\eps,\omega)\partial_x$, so the solution to \eqref{eq:mainEq} can be written as
\[
u_\eps(t,x)=\E_B\{ f(\eps X_{t/\eps^2})\},
\]
where $\E_B$ denotes the expectation in $(\Sigma,\mathcal{A},\P_B)$. 

It can be shown that $\eps X_{t/\eps^2}$ converges in distribution to $x+\bar{\sigma}W_t$ for some Brownian motion $W_t$ starting from the origin and $\bar{\sigma}=\sqrt{\bar{a}}$, so 
\[
u_\eps(t,x)=\E_B\{f(\eps X_{t/\eps^2})\}\to \E_W\{ f(x+\bar{\sigma}W_t)\}= u_{\hom}(t,x)
\]
in probability. It is clear that to further get the first order fluctuations $v_\eps=\eps^{-1/2}(u_\eps-u_{\hom})$, we need to quantify the weak convergence of $\eps X_{t/\eps^2}\Rightarrow x+\bar{\sigma}W_t$ up to the first order.

We define an environmental process by $\omega_t=\tau_{X_t}\omega$, and it satisfies the following properties \cite[Proposition 9.8]{KLO-SP-12}:
\begin{prop}
$(\omega_s)_{s\geq 0}$ is a Markov process that is reversible and ergodic with respect to the measure $\P$.
\end{prop}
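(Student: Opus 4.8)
The statement to be proved is the well-known fact that the environmental process $(\omega_t)_{t\ge 0}$, with $\omega_t = \tau_{X_t}\omega$ and $X_t$ solving the It\^o SDE \eqref{eq:sde}, is a $\P$-reversible, $\P$-ergodic Markov process. The plan is to follow the standard "point of view of the particle" argument as in \cite[Chapter 9]{KLO-SP-12}, specialized to the one dimensional reversible diffusion at hand. First I would verify the Markov property: by translation covariance of the coefficients, $\tilde b(\tau_y\omega \text{ at } z) = \tilde b(z+y,\omega)$ and similarly for $\tilde\sigma$, so if $X$ starts at $x$ in the environment $\omega$, then $X_{t+s}-X_s$ is, conditionally on $\mathcal F_s$, distributed as the solution of \eqref{eq:sde} started at $0$ in the shifted environment $\tau_{X_s}\omega = \omega_s$. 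Hence $(\omega_t)$ is a time-homogeneous Markov process on $\Omega$ with a well-defined semigroup $P_t h(\omega) = \E_B^\omega\{h(\tau_{X_t}\omega)\}$.

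Next I would identify the generator on a suitable core and check reversibility (symmetry) with respect to $\P$. Writing the SDE in divergence form, the formal generator of $(\omega_t)$ is $L = \tfrac12 D_x\,a(\omega)\,D_x$, where $D_x$ is the $L^2(\P)$-generator of the translation group $\{\tau_x\}$ (so that $D_x h(\omega) = \tilde h'(0,\omega)$ for $\tilde h(x,\omega)=h(\tau_x\omega)$). The key computation is the integration-by-parts identity
\[
\E\{(Lh)\,g\} = -\tfrac12\,\E\{a(\omega)\,(D_x h)(D_x g)\} = \E\{h\,(Lg)\},
\]
valid for $h,g$ in the domain of $D_x$; this uses that $\{\tau_x\}$ is measure-preserving, so $\E\{(D_x h)g\} = -\E\{h\,(D_x g)\}$, i.e.\ $D_x$ is skew-adjoint on $L^2(\P)$. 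Symmetry of $L$ plus the fact that constants are invariant ($L\mathbf 1 = 0$) gives that $\P$ is a reversible invariant measure; one then promotes this to reversibility of the Markov semigroup $P_t = e^{tL}$ by the standard Hille--Yosida / closure argument (the Dirichlet form $\mathcal E(h,h) = \tfrac12\E\{a(\omega)|D_x h|^2\}$ is closable since $\lambda\le a\le 1$, and its associated semigroup coincides with the transition semigroup of $(\omega_t)$ by uniqueness of the martingale problem, which holds because $\tilde a$ is smooth, bounded, and uniformly elliptic).

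Finally I would prove ergodicity of $(\omega_t)$ under $\P$. The clean route is to show that the only $P_t$-invariant $L^2(\P)$ functions are the constants, equivalently that $\mathcal E(h,h)=0 \implies h$ constant $\P$-a.s. Since $\lambda \le a \le 1$, $\mathcal E(h,h)=0$ forces $D_x h = 0$ in $L^2(\P)$, which means $x\mapsto h(\tau_x\omega)$ is $\P$-a.s.\ constant in $x$; hence $h$ is invariant under the whole translation group $\{\tau_x\}$, and ergodicity of $\{\tau_x\}$ with respect to $\P$ (assumed in the setup) forces $h$ to be $\P$-a.s.\ constant. This gives that $\P$ is an ergodic stationary measure for $(\omega_t)$.

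\textbf{Main obstacle.} The delicate point is the domain/core issue: making rigorous that the semigroup of the Markov process $(\omega_t)$ genuinely equals the one generated by the Dirichlet form $\mathcal E$, rather than merely agreeing formally. This requires knowing that a suitable dense algebra of "smooth cylinder" functions on $\Omega$ (built from bounded smooth functions of finitely many $\int \tilde a(x,\cdot)\varphi(x)\,dx$) is a core, and that the martingale problem for \eqref{eq:sde} is well-posed $\P$-a.s.\ — both of which follow from the regularity and ellipticity hypotheses on $\tilde a$ but need the standard one dimensional SDE theory to be invoked carefully. Once that identification is in place, reversibility and ergodicity are immediate from skew-adjointness of $D_x$ and ergodicity of the shift, as sketched above; indeed this is exactly the content of \cite[Proposition 9.8]{KLO-SP-12}, to which one may ultimately appeal.
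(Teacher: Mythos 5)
Your proposal is correct, and it is essentially the paper's approach: the paper gives no proof of this proposition at all, simply citing \cite[Proposition 9.8]{KLO-SP-12}, and your sketch (Markov property via translation covariance, reversibility from skew-adjointness of $D_x$ and the Dirichlet form $\tfrac12\E\{a|D_xh|^2\}$, ergodicity from uniform ellipticity plus ergodicity of the shift group) is precisely the standard argument behind that citation. You also correctly identify the only delicate point (the core/domain identification of the semigroup with the Dirichlet form) and defer to the same reference the paper relies on, so nothing is missing.
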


In Sections~\ref{s:err} and \ref{s:wc}, we will only show that for fixed $(t,x)$, $v_\eps(t,x)\Rightarrow v(t,x)$ in distribution. To simplify the presentation,  we will shift the random environment $\omega$ by $x/\eps$ without changing the distribution of $u_\eps(t,x)$ (for fixed $(t,x)$), thus in the following we will assume
\[
u_\eps(t,x)=\E_B\{ f(x+\eps X_{t/\eps^2})\}
\]
with $X_t$ solving \eqref{eq:sde} but starting from the origin:
\begin{equation}
dX_t=\tilde{b}(X_t,\omega)dt+\tilde{\sigma}(X_t,\omega)dB_t, \mbox{ with } X_0=0.
\end{equation}
The convergence of finite dimensional distributions and the global weak convergence are discussed in Section~\ref{s:dis}.

To simplify the notations, we will omit the dependence on $\omega$ from now on.

\section{Quenched invariance principle and error decomposition}
\label{s:err}

To quantify the weak convergence, we first present a proof of $\eps X_{t/\eps^2}\Rightarrow \bar{\sigma}W_t$, where the diffusion in random environment is decomposed as a remainder plus a martingale, and the speed of weak convergence hinges on how small the remainder is and how ``close'' in distribution the martingale is to the limiting Brownian motion.

By the uniform ellipticity condition, we have the following standard heat kernel estimates \cite{stroock1988diffusion} which will be used extensively in our analysis.

(i) The density function $q_\eps(t,x)$ of $\eps X_{t/\eps^2}$ satisfies
\begin{equation}
q_\eps(t,x)\les \frac{1}{\sqrt{t}}e^{-c|x|^2/t}
\label{eq:heatk}
\end{equation}
uniformly in $\eps,\omega$ for some $c>0$.

(ii) For any $t>0$,
\begin{equation}
\P_B\{\sup_{s\in [0,t]}|\eps X_{s/\eps^2}|\geq M\}\les e^{-cM^2/t}
\label{eq:heatkp}
\end{equation}
uniformly in $\eps,\omega$ for some $c>0$.

The following result is classical. For the sake of convenience, we present the proof here.

\begin{prop}
For almost every realization $\omega\in\Omega$, $\eps X_{t/\eps^2}\Rightarrow \bar{\sigma}W_t$ in $\C(\R_+)$.
\label{p:qiv}
\end{prop}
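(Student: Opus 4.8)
The plan is to use the classical Kipnis--Varadhan martingale decomposition for the reversible environmental process $\omega_t = \tau_{X_t}\omega$, which will exhibit $X_t$ as the sum of a martingale with stationary increments plus a small corrector term that vanishes after diffusive rescaling. Concretely, one seeks a function $\psi$ on $\Omega$ (formally solving $L\psi = \tilde b$ at the level of the environment, i.e. the corrector equation) such that, writing $\Psi(x) = \int_0^x (\text{something})$, we can decompose
\begin{equation}
X_t = M_t + R_t,
\end{equation}
where $M_t$ is an $L^2$ martingale (under $\P_B$, for a.e.\ $\omega$) whose quadratic variation $\la M\ra_t$ satisfies $\eps^2 \la M\ra_{t/\eps^2} \to \bar\sigma^2 t$ by the ergodic theorem applied to the environmental process (using reversibility and ergodicity from the Proposition quoted from \cite{KLO-SP-12}), and where $R_t$ is a remainder bounded in terms of the corrector evaluated along the path. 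In the one-dimensional uniformly elliptic setting the corrector is in fact explicit: $\tilde a(x)\frac{d}{dx}(x + \psi(x)) = \bar a$ gives $x + \psi(x) = \bar a \int_0^x \tilde a(y)^{-1}\,dy$, so the harmonic coordinate is available in closed form and $\psi$ is a stationary, mean-zero, sublinear function. This makes the remainder control transparent: $\eps \sup_{s\le t}|R_{s/\eps^2}| = \eps \sup_{s\le t}|\psi(X_{s/\eps^2})| \to 0$ a.s.\ in $\o$ and in probability in $B$, using stationarity of $\psi$, the heat-kernel exit bound \eqref{eq:heatkp}, and a Borel--Cantelli argument over a countable dense set of times.

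First I would introduce the harmonic coordinate $S(x) = \bar a \int_0^x \tilde a(y)^{-1}\,dy = x + \psi(x)$, observe via It\^o's formula that $S(X_t)$ is a local martingale (the drift $\tilde b = \frac12 \tilde a'$ is exactly what kills the first-order term), with quadratic variation $\la S(X)\ra_t = \bar a^2 \int_0^t \tilde a(X_s)^{-1}\,ds$. Then I would rescale: $\eps S(X_{t/\eps^2}) = \eps X_{t/\eps^2} + \eps \psi(X_{t/\eps^2})$, so it suffices to show (a) $\eps \psi(X_{t/\eps^2}) \to 0$ uniformly on compacts, and (b) $\eps S(X_{t/\eps^2}) \Rightarrow \bar\sigma W_t$. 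For (b), by the martingale functional CLT (e.g.\ the Dambis--Dubins--Schwarz / time-change representation) it is enough that the rescaled quadratic variation converges: $\eps^2 \la S(X)\ra_{t/\eps^2} = \bar a^2 \eps^2 \int_0^{t/\eps^2} \tilde a(X_s)^{-1}\,ds = \bar a^2 \int_0^t \frac{1}{\tilde a(X_{r/\eps^2})}\,dr$, and this converges a.s.\ (in $\o$) to $\bar a^2 \cdot t \cdot \E\{\tilde a^{-1}\} = \bar a^2 \bar a^{-1} t = \bar a t = \bar\sigma^2 t$ by the ergodic theorem for the environmental process $\omega_r = \tau_{X_r}\omega$, whose invariant measure is $\P$ (this is where the quoted Proposition enters). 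Tightness in $\C(\R_+)$ follows from the heat-kernel moment bounds implicit in \eqref{eq:heatk}--\eqref{eq:heatkp}, or directly from the Aldous criterion applied to the time-changed Brownian motion.

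For step (a), the sublinearity of the corrector, I would argue that $\psi$ is stationary with $\E\psi = 0$ (it has the explicit form $\psi(\tau_x\o) = \bar a \int_0^x(\tilde a(y,\o)^{-1} - \bar a^{-1})\,dy = \bar a \int_0^x \tilde V(y)\,dy$, i.e.\ $\psi(\tau_x\o)$ is itself a sublinear additive functional of $\o$ of the very form $\tilde V$ that appears in \eqref{eq:defv}), so by the ergodic theorem $\psi(x)/x \to 0$ as $|x|\to\infty$ for a.e.\ $\o$; combining this with the diffusive exit estimate $\P_B\{\sup_{s\le t}|\eps X_{s/\eps^2}| \ge M\} \lesssim e^{-cM^2/t}$ from \eqref{eq:heatkp} yields $\eps\sup_{s\le t}|\psi(X_{s/\eps^2})| = \sup_{s\le t}|\eps X_{s/\eps^2}|\cdot \frac{|\psi(X_{s/\eps^2})|}{|X_{s/\eps^2}|} \to 0$ in $\P_B$-probability for a.e.\ $\o$. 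Finally I would promote finite-dimensional convergence plus tightness to weak convergence in $\C(\R_+)$.

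The main obstacle is not any single estimate — in $d=1$ the explicit corrector trivializes what is hard in higher dimensions — but rather the bookkeeping of the two sources of randomness: the statements must hold \emph{for almost every fixed realization $\o$} (quenched), with the Brownian motion $B$ providing the only remaining randomness, so every application of the ergodic theorem (for $\eps^2\la M\ra_{t/\eps^2}$ and for $\psi(x)/x$) must be justified along the random trajectory $X_s(\o)$, using the reversibility/ergodicity of the environmental process rather than spatial ergodicity directly, and the null set of bad $\o$'s must be taken uniformly over a countable dense set of times before upgrading to $\C(\R_+)$. Getting the quantifiers in the right order — choosing the exceptional $\P$-null set once, then working with $\P_B$ — is the step that requires care.
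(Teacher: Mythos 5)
Your proposal is correct and follows essentially the same route as the paper: your harmonic coordinate $S(x)=\bar a\int_0^x\tilde a(y)^{-1}dy=x+\psi(x)$ is exactly the paper's corrector decomposition $\eps X_{t/\eps^2}=R_t^\eps+M_t^\eps$ with $\psi=\tilde\phi=\bar a\int_0^x\tilde V$, the remainder is killed by sublinearity of the corrector plus the exit bound \eqref{eq:heatkp}, and the martingale part converges via the ergodic theorem for the environmental process applied to $\langle M^\eps\rangle_t$ followed by the martingale CLT. The only cosmetic differences are your explicit mention of Dambis--Dubins--Schwarz and a Borel--Cantelli/countable-times bookkeeping step, which the paper subsumes in the standard martingale functional CLT.
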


\emph{Proof of Proposition~\ref{p:qiv}}. First we write the SDE as the integral equation
\[
\eps X_{t/\eps^2}=\eps \int_0^{t/\eps^2}\tilde{b}(X_s)ds+\eps\int_0^{t/\eps^2}\tilde{\sigma}(X_s)dB_s
\]
and by solving a corrector equation
\begin{equation}
-\frac12\frac{d}{dx} \tilde{a}\frac{d}{dx}\tilde{\phi}=\tilde{b}=\frac12\frac{d}{dx}\tilde{a},
\label{eq:coreq}
\end{equation}
and applying It\^o's formula, we have 
\begin{equation}
\eps X_{t/\eps^2}=R_t^\eps+M_t^\eps,
\label{eq:made}
\end{equation}
with 
\[
R_t^\eps=-\eps \tilde{\phi}(X_{t/\eps^2})+\eps \tilde{\phi}(X_0),
\]
and
\[
M_t^\eps=\bar{a}\eps\int_0^{t/\eps^2}\frac{1}{\tilde{\sigma}(X_s)}dB_s.
\]
Here $\tilde{\phi}$ satisfies 
\[
\tilde{\phi}'(x)=\bar{a}\left(\frac{1}{\tilde{a}(x)}-\frac{1}{\bar{a}}\right)=\bar{a}\tilde{V}(x),
\]
and we choose 
\begin{equation}
\tilde{\phi}(x)=\bar{a}\int_0^x \tilde{V}(y)dy.
\label{eq:defphi}
\end{equation}

For the remainder, we have 
\[
\begin{aligned}
\sup_{t\in [0,T]}|R_t^\eps|\leq & \sup_{t\in [0,T]}\eps |\tilde{\phi}(\eps X_{t/\eps^2}/\eps)|\\
\leq &\sup_{t\in [0,T]}\eps|\tilde{\phi}(\eps X_{t/\eps^2}/\eps)|1_{\sup_{t\in [0,T]}|\eps X_{t/\eps^2}|\leq M}\\
&+\sup_{t\in [0,T]}\eps|\tilde{\phi}(\eps X_{t/\eps^2}/\eps)|1_{\sup_{t\in [0,T]}|\eps X_{t/\eps^2}|> M}
\end{aligned}
\]
for any constant $M>0$. By ergodicity and the fact that $\E\{\tilde{V}\}=0$, we have for almost every realization $\omega\in\Omega$, $\tilde{\phi}(x)/x\to 0$ as $|x|\to \infty$, i.e., the corrector has a sublinear growth. This implies
\[
\sup_{t\in [0,T]}\eps|\tilde{\phi}(\eps X_{t/\eps^2}/\eps)|1_{\sup_{t\in [0,T]}|\eps X_{t/\eps^2}|\leq M}\to 0
\]
as $\eps\to 0$. For the second part, we use the sublinear growth to obtain
\[
\eps|\tilde{\phi}(\eps X_{t/\eps^2}/\eps)|1_{\sup_{t\in [0,T]}|\eps X_{t/\eps^2}|> M}\les |\eps X_{t/\eps^2}|1_{\sup_{t\in [0,T]}|\eps X_{t/\eps^2}|> M},
\]
so we only need to show that
\[
\E_B\{\sup_{t\in[0,T]}|\eps X_{t/\eps^2}|1_{\sup_{t\in [0,T]}|\eps X_{t/\eps^2}|> M}\}\to 0
\]
uniformly in $\eps$ as $M\to\infty$, but this comes from \eqref{eq:heatkp}. To summarize, we have shown that for almost every $\omega$, the remainder $R_t^\eps\to 0$ in $\C(\R_+)$.

For the martingale part, the quadratic variation can be written as
\[
\langle M^\eps\rangle_t=\bar{a}^2\eps^2\int_0^{t/\eps^2}\frac{1}{a(\tau_{X_s}\omega)}ds,
\]
so by ergodicity of $\tau_{X_s}\omega$ (it is independent of $\eps$ since we have shifted by environment), we have that for almost every $\omega$, $\langle M^\eps\rangle_t\to \bar{a}t$ almost surely as $\eps\to0$, and this implies $M_t^\eps\Rightarrow \bar{\sigma}W_t$ in $\C(\R_+)$. The proof is complete.

\vskip 3mm

Now we can decompose the error in homogenization according to the martingale decomposition of $\eps X_{t/\eps^2}$. By \eqref{eq:made}, we write
\[
u_\eps(t,x)-u_{\hom}(t,x)=\E_B\{f(x+R_t^\eps+M_t^\eps)\}-\E_W\{f(x+\bar{\sigma}W_t)\}:=\cE_1+\cE_2
\] 
with 
\[
\cE_1=\E_B\{f(x+R_t^\eps+M_t^\eps)\}-\E_B\{f(x+M_t^\eps)\},
\]
and
\[
\cE_2=\E_B\{f(x+M_t^\eps)\}-\E_W\{f(x+\bar{\sigma}W_t)\}.
\]
The main contribution from $\cE_1$ can be extracted by a Taylor expansion, i.e., we have
\[
|\cE_1-\E_B\{f'(x+M_t^\eps)R_t^\eps\}|\les \E_B\{|R_t^\eps|^2\}.
\]
The main contribution from $\cE_2$ can be extracted by a quantitative martingale CLT \cite[Proposition 3.2]{gu2015fluctuations}:
\[
\begin{aligned}
|\cE_2-\frac12\E_B\{f''(x+M_t^\eps)(\langle M^\eps\rangle_t-\bar{a}t)\}|\les &\E_B\{|\langle M^\eps\rangle_t-\bar{a}t|^{\frac32}\}\\
\leq & (\E_B\{|\langle M^\eps\rangle_t-\bar{a}t|^2\})^{\frac34}.
\end{aligned}
\]

Now we define 
\[
v_{1,\eps}(t,x)=\E_B\{f'(x+M_t^\eps)R_t^\eps\}, \ \ v_{2,\eps}(t,x)=\frac12\E_B\{f''(x+M_t^\eps)(\langle M^\eps\rangle_t-\bar{a}t)\},
\]
By Lemmas~\ref{lem:resm} and \ref{lem:masm} below, we have
\begin{equation}
\E\{|u_\eps(t,x)-u_{\hom}(t,x)-v_{1,\eps}(t,x)-v_{2,\eps}(t,x)|\}\ll \sqrt{\eps},
\end{equation}
therefore, to analyze the asymptotic distribution of $\eps^{-1/2}(u_\eps-u_{\hom})$, we only need to consider $v_{1,\eps}+v_{2,\eps}$.

\begin{lem}
$\E\E_B\{|R_t^\eps|^2\}\les \eps\sqrt{t} $
\label{lem:resm}
\end{lem}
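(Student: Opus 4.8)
The plan is to estimate $\E\E_B\{|R_t^\eps|^2\}$ directly using the explicit form of the remainder. Recall $R_t^\eps = -\eps\tilde\phi(X_{t/\eps^2}) + \eps\tilde\phi(X_0)$ with $X_0 = 0$, so $\tilde\phi(X_0) = 0$ and $R_t^\eps = -\eps\tilde\phi(X_{t/\eps^2})$. Hence
\[
\E\E_B\{|R_t^\eps|^2\} = \eps^2\, \E\E_B\{|\tilde\phi(X_{t/\eps^2})|^2\} = \E\E_B\{|\eps\tilde\phi(\eps X_{t/\eps^2}/\eps)|^2\}.
\]
The first step is to reduce to a bound on the corrector: since $\tilde\phi(x) = \bar a\int_0^x \tilde V(y)\,dy$ and $\tilde V$ is a mean-zero, finite-range-dependence, bounded field, its running integral behaves diffusively, so $\E\{|\tilde\phi(x)|^2\} \les |x|$ uniformly. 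This is the key input; I would either cite it or prove it in a few lines by splitting $[0,x]$ into unit intervals, using that $\tilde V$ restricted to intervals at distance $\geq 1$ are independent (so cross terms beyond distance $1$ vanish) and that $R$ is bounded and supported in $[-1,1]$, giving $\E\{|\int_0^x \tilde V|^2\} = \int_0^x\int_0^x R(y-z)\,dy\,dz \les |x|$.

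The second step is to insert this into the expectation over $B$, using that $\E$ and $\E_B$ commute (the diffusion $X$ depends on both $\omega$ and $B$, but the heat kernel bound \eqref{eq:heatk} is uniform in $\omega$): writing $Y_\eps = \eps X_{t/\eps^2}$,
\[
\E\E_B\{|\eps\tilde\phi(Y_\eps/\eps)|^2\} = \E_B\, \E\{|\eps\tilde\phi(Y_\eps/\eps)|^2\} \les \E_B\{\eps^2 \cdot |Y_\eps|/\eps\} = \eps\, \E_B\{|Y_\eps|\} = \eps\, \E_B\{|\eps X_{t/\eps^2}|\}.
\]
Then the heat kernel estimate \eqref{eq:heatk} (or equivalently a first-moment bound derived from \eqref{eq:heatkp}) gives $\E_B\{|\eps X_{t/\eps^2}|\} \les \int_\R |x| \frac{1}{\sqrt t}e^{-c|x|^2/t}\,dx \les \sqrt t$, which yields $\E\E_B\{|R_t^\eps|^2\} \les \eps\sqrt t$ as claimed.

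The main obstacle is the interchange of $\E$ and $\E_B$ together with the fact that $Y_\eps$ is itself $\omega$-dependent: one cannot simply condition on $Y_\eps$ and apply the diffusive bound for fixed $x$, since the law of $X$ under $\P_B$ depends on $\omega$. The clean way around this is to observe that the bound $\E\{|\tilde\phi(x/\eps)|^2\} \les |x|/\eps$ holds \emph{for every fixed} $x$, then bound $\eps^2|\tilde\phi(Y_\eps/\eps)|^2 \leq \eps^2 \sup_{|z|\le |Y_\eps|}|\tilde\phi(z/\eps)|^2$ is \emph{not} directly usable under $\E$; instead use Fubini with the joint law and the pointwise-in-$x$ heat kernel bound \eqref{eq:heatk}, i.e.
\[
\E\E_B\{\eps^2|\tilde\phi(X_{t/\eps^2})|^2\} = \eps^2 \int_\R \E\{|\tilde\phi(x)|^2\}\, q_\eps(t,\eps x)\, \eps\, dx \les \eps^2 \int_\R |x| \cdot \frac{1}{\sqrt t} e^{-c\eps^2|x|^2/t}\, \eps\, dx = \eps\int_\R |y|\frac{1}{\sqrt t}e^{-c|y|^2/t}\,dy \les \eps\sqrt t,
\]
where I used that $q_\eps(t,\cdot)$ is uniform in $\omega$ so the quenched average and $\E$ genuinely commute. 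A minor technical point to handle is that $q_\eps$ is the density of $\eps X_{t/\eps^2}$, so the change of variables must be done carefully, but this is routine.
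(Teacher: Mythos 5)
Your proof is correct and takes essentially the same route as the paper: reduce to $\eps\tilde\phi(X_{t/\eps^2})$ (since $X_0=0$), use $\E\{|\tilde\phi(x)|^2\}\les|x|$ via the covariance integral of $\tilde V$ (the paper's Lemma~\ref{lem:mmpp}), and handle the $\omega$-dependence of the law of $X$ by bounding the random density $q_\eps(t,\cdot)$ of $\eps X_{t/\eps^2}$ with the deterministic Gaussian kernel \eqref{eq:heatk} uniformly in $\omega$, then applying Fubini. The only blemish is notational: your displayed identity $\E\E_B\{\eps^2|\tilde\phi(X_{t/\eps^2})|^2\}=\eps^2\int_\R\E\{|\tilde\phi(x)|^2\}\,q_\eps(t,\eps x)\,\eps\,dx$ cannot hold as an equality (the random $q_\eps$ is correlated with $\tilde\phi$ and cannot sit outside $\E$); it should be the inequality obtained by first invoking \eqref{eq:heatk} for each fixed $\omega$ and only then taking $\E$ inside the $x$-integral, which is precisely what your surrounding discussion, and the paper, do.
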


\begin{lem}
$\E\E_B\{|\langle M^\eps\rangle_t-\bar{a}t|^2\}\les \eps t^{\frac32}$
\label{lem:masm}
\end{lem}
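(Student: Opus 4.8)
The plan is to start from the explicit formula for the quadratic variation,
\[
\langle M^\eps\rangle_t-\bar a t=\bar a^2\eps^2\int_0^{t/\eps^2}\tilde V(X_s)\,ds,
\]
which follows from $\tfrac{1}{a(\tau_{X_s}\omega)}=\tilde V(X_s)+\tfrac1{\bar a}$ and $\bar a^2\cdot\tfrac1{\bar a}=\bar a$. Taking the second moment and using Fubini, we must control
\[
\bar a^4\eps^4\int_0^{t/\eps^2}\!\!\int_0^{t/\eps^2}\E\E_B\big\{\tilde V(X_{s_1})\tilde V(X_{s_2})\big\}\,ds_1\,ds_2,
\]
so the heart of the matter is a bound on the two-point function $\E\E_B\{\tilde V(X_{s_1})\tilde V(X_{s_2})\}$. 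First I would symmetrize, assume $s_1\le s_2$, condition on $\mathcal F_{s_1}\vee\sigma(B_u,u\le s_1)$, and use the Markov property of $X$ together with the finite-range dependence of $\tilde V$ to argue that the environment seen near $X_{s_2}$ decorrelates from that seen near $X_{s_1}$ once the diffusion has traveled distance $\ge 1$. Concretely, writing $\tilde V(X_{s_2})=\tilde V(X_{s_2})-\tilde V(X_{s_1})+\tilde V(X_{s_1})$ is not quite the right move; instead I would split the inner expectation over the event $\{|X_{s_2}-X_{s_1}|\le 1\}$ and its complement. On the complement, use independence to replace $\E\{\tilde V(X_{s_1})\tilde V(X_{s_2})\mid X\}$ by a product of conditional means which, after using $\E\{\tilde V\}=0$ carefully (this needs a small argument since $X_{s_1},X_{s_2}$ are not independent of the environment), is negligible; on the event $\{|X_{s_2}-X_{s_1}|\le 1\}$, bound $|\tilde V|\le\lambda^{-1}+\bar a^{-1}$ by a constant and estimate $\P_B\{|X_{s_2}-X_{s_1}|\le 1\}$ by the Gaussian-type heat kernel bound \eqref{eq:heatk}, which gives a factor $\les \min(1,|s_2-s_1|^{-1/2})$ after integrating the transition density over a unit interval.

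The cleanest way to make the decorrelation rigorous is to use the environmental process: since $\omega_s=\tau_{X_s}\omega$ is a stationary, reversible, ergodic Markov process under $\P\otimes\P_B$ (started from the stationary law $\P$ after the environment shift described in Section~\ref{s:re}), the quantity $\E\E_B\{\tilde V(X_{s_1})\tilde V(X_{s_2})\}$ depends only on $|s_2-s_1|=:r$ and equals $\E\E_B\{V(\omega_0)V(\omega_r)\}$ with $V(\omega):=a(\omega)^{-1}-\bar a^{-1}$. Then I would combine two estimates: the trivial bound $|\E\E_B\{V(\omega_0)V(\omega_r)\}|\le \|V\|_\infty^2\les 1$ for small $r$, and for large $r$ the bound $\les \min(1, r^{-1/2})$ coming from the heat-kernel argument above (displacement of order $\sqrt r$, so the chance of staying within a fixed range is $\les r^{-1/2}$, and on that range $\tilde V$ decorrelates by finite range dependence outside it). Feeding $g(r):=\min(1,r^{-1/2})$ into the double integral,
\[
\bar a^4\eps^4\int_0^{t/\eps^2}\!\!\int_0^{t/\eps^2} g(|s_1-s_2|)\,ds_1\,ds_2
\les \eps^4\cdot\frac{t}{\eps^2}\int_0^{t/\eps^2} g(r)\,dr
\les \eps^4\cdot\frac{t}{\eps^2}\cdot\Big(\frac{\sqrt t}{\eps}\Big)
=\eps\, t^{3/2},
\]
which is exactly the claimed bound; here $\int_0^{T}\min(1,r^{-1/2})dr\les \sqrt T$ with $T=t/\eps^2$.

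The main obstacle is the decorrelation estimate on $\E\E_B\{\tilde V(X_{s_1})\tilde V(X_{s_2})\}$, specifically handling the fact that the diffusion $X$ is \emph{not} independent of the environment $\omega$ — the path $X$ is built from $\tilde b,\tilde\sigma$ which themselves depend on $\omega$ — so one cannot naively integrate out $\omega$ first. The resolution is precisely the Markov/reversibility structure of the environmental process $\omega_s$: conditioning on $\mathcal F_{X_{[0,s_1]}}$ and using the strong Markov property reduces the two-point function to $\E\{V(\omega_0)\,\E_{\omega_0}[V(\omega_r)]\}$, and then finite range dependence of $a$ (Assumption (ii)) together with the heat kernel bound \eqref{eq:heatkp} controls the probability that $\omega_r$ still ``sees'' the initial configuration. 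I would carry out the steps in this order: (1) write out $\langle M^\eps\rangle_t-\bar a t$ and reduce to the double integral; (2) identify the integrand with the stationary two-point function $\E\E_B\{V(\omega_0)V(\omega_r)\}$; (3) prove the bound $\les\min(1,r^{-1/2})$ via heat kernel estimates plus finite-range dependence; (4) perform the elementary integration to reach $\eps t^{3/2}$.
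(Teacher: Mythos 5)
Your reduction to the double time integral and the final computation are fine: if one knew
\[
\big|\E\E_B\{\tilde V(X_{s_1})\tilde V(X_{s_2})\}\big|\les \min\big(1,|s_1-s_2|^{-1/2}\big),
\]
then indeed $\bar a^4\eps^4\int_0^{t/\eps^2}\int_0^{t/\eps^2}\min(1,|s_1-s_2|^{-1/2})\,ds_1ds_2\les \eps^4 (t/\eps^2)^{3/2}=\eps t^{3/2}$. The genuine gap is step (3): the decorrelation bound is asserted, and the mechanism you propose for it does not work as stated. Finite range dependence of $\tilde a$ (and hence of $\tilde V$) is a statement about the field under $\P$; it gives you nothing once you condition on path events such as $\{|X_{s_2}-X_{s_1}|>1\}$, because the path is itself a functional of the environment everywhere it has visited (in $d=1$ it sweeps the whole interval between $X_{s_1}$ and $X_{s_2}$), so the conditional law of the field near $X_{s_2}$ given the trajectory is biased in a way that $\E\{\tilde V\}=0$ and range-one independence do not control. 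Your fallback via the environment process has the same problem: after conditioning, what you must estimate is $\E\{V(\omega_0)\,\E_{\omega_0}[V(\omega_r)]\}$ with $\E_{\omega_0}[V(\omega_r)]=\int_\R \tilde V(y)\,p_r^\omega(0,y)\,dy$, and the quenched transition density $p_r^\omega$ depends on the environment globally and is correlated with both $\tilde V(0)$ and $\tilde V(y)$. The uniform heat kernel bound \eqref{eq:heatk} only provides a Gaussian envelope, not any decoupling, so it cannot by itself produce the cancellation you need (with the envelope alone you only get an $O(1)$ bound on this quenched average, not $O(r^{-1/2})$). In fact the correlation bound $\les r^{-1/2}$ is essentially equivalent to the lemma itself (by reversibility it amounts to the spectral measure estimate $e_V([0,\lambda])\les\sqrt\lambda$), so as written the proposal proves the lemma by assuming a statement of the same depth.

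The way the paper closes exactly this gap is the Kipnis--Varadhan device: introduce a second corrector $\tilde\psi$ with $-\tfrac12(\tilde a\tilde\psi')'=\tilde V$ as in \eqref{eq:defpsi}, apply It\^o's formula to write $\eps^2\int_0^{t/\eps^2}\tilde V(X_s)\,ds=-\eps^2\tilde\psi(X_{t/\eps^2})+\eps^2\tilde\psi(X_0)+\eps^2\int_0^{t/\eps^2}\tilde\psi'(X_s)\tilde\sigma(X_s)\,dB_s$, and then only \emph{static} moment bounds on the environment are needed: $\E\{|\tilde\psi(x)|^2\}\les|x|^3$ and $\E\{|\tilde\phi(x)|^2\}\les|x|$ from Lemma~\ref{lem:mmpp}, combined with It\^o's isometry and the heat kernel envelope \eqref{eq:heatk}. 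This sidesteps any quantitative annealed decorrelation along the path. If you want to keep your two-point-function strategy, you would have to prove the $r^{-1/2}$ decay by some independent quantitative argument (e.g.\ via resolvent/spectral estimates for $V$, which in $d=1$ are again obtained through the explicit corrector), at which point you have effectively reproduced the paper's proof.
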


\begin{proof}[Proof of Lemma~\ref{lem:resm}]
Since $R_t^\eps=-\eps\tilde{\phi}(X_{t/\eps^2})+\eps\tilde{\phi}(X_0)$ and $\tilde{\phi}(x)=\bar{a}\int_0^x \tilde{V}(y)dy$,
we only need to consider $\eps\tilde{\phi}(X_{t/\eps^2})$. If we write 
\[
\E_B\{|\eps \tilde{\phi}(X_{t/\eps^2})|^2\}=\int_{\R} |\eps\tilde{\phi}(\frac{x}{\eps})|^2q_\eps(t,x)dx,
\]
with $q_\eps(t,x)$ the density function of $\eps X_{t/\eps^2}$ (which depends on the random realization $\omega$), by the heat kernel bound \eqref{eq:heatk} we have
\[
\E_B\{|\eps \tilde{\phi}(X_{t/\eps^2})|^2\}\les \int_\R |\eps\tilde{\phi}(\frac{x}{\eps})|^2\frac{1}{\sqrt{t}}e^{-c|x|^2/t}dx
\]
for every $\omega$. By Lemma~\ref{lem:mmpp}, $\E\{|\tilde{\phi}(x)|^2\}\les |x|$, so we can take $\E$ on both sides of the above expression to obtain 
\[
\E\E_B\{|R_t^\eps|^2\}\les \eps  \int_\R |x|\frac{1}{\sqrt{t}}e^{-c|x|^2/t}dx\les\eps \sqrt{t}.
\]
\end{proof}

\begin{proof}[Proof of Lemma~\ref{lem:masm}]
First, we write
\begin{equation}
\langle M^\eps\rangle_t-\bar{a}t=\bar{a}^2\eps^2\int_0^{t/\eps^2}\left(\frac{1}{\tilde{a}(X_s)}-\frac{1}{\bar{a}}\right)ds=\bar{a}^2\eps^2\int_0^{t/\eps^2}\tilde{V}(X_s)ds,
\end{equation}
where we recall $\tilde{V}=\tilde{a}^{-1}-\bar{a}^{-1}$. Since $\tilde{V}$ has mean zero, by ergodic theorem, we have $\langle M^\eps\rangle_t-\bar{a}t\to 0$ as $\eps\to 0$, but to quantify how small it is, we need to apply a martingale decomposition again, in the same spirit as for $\tilde{b}$. 

Let $\tilde{\psi}$ satisfy
\begin{equation}
-\frac12\frac{d}{dx}\tilde{a}\frac{d}{dx}\tilde{\psi}=\tilde{V},
\label{eq:defpsi}
\end{equation}
then 
\[
\eps^2\int_0^{t/\eps^2}\tilde{V}(X_s)ds=-\eps^2\tilde{\psi}(X_{t/\eps^2})+\eps^2\tilde{\psi}(X_0)+\eps^2\int_0^{t/\eps^2}\tilde{\psi}'(X_s)\tilde{\sigma}(X_s)dB_s.
\]
Since $\tilde{\phi}(x)=\bar{a}\int_0^x\tilde{V}(y)dy$, we can choose 
\begin{equation}
\tilde{\psi}(x)=-\frac{2}{\bar{a}}\int_0^x \frac{\tilde{\phi}(y)}{\tilde{a}(y)}dy.
\label{eq:defpsi1}
\end{equation}
By Lemma~\ref{lem:mmpp} we have $\E\{|\tilde{\psi}(x)|^2\}\les |x|^3$, and we follow the same discussion as in the proof of Lemma~\ref{lem:resm}. For $\eps^2\tilde{\psi}(X_{t/\eps^2})$ we have
\[
\E_B\{|\eps^2\tilde{\psi}(X_{t/\eps^2})|^2\}=\int_\R \eps^4|\tilde{\psi}(\frac{x}{\eps})|^2q_\eps(t,x)dx,
\]
with $q_\eps(t,x)$ the density of $\eps X_{t/\eps^2}$. Applying the heat kernel bound \eqref{eq:heatk} and taking $\E$, we conclude that
\[
\E\E_B\{|\eps^2\tilde{\psi}(X_{t/\eps^2})|^2\}\les \eps \int_\R |x|^3\frac{1}{\sqrt{t}}e^{-c|x|^2/t}dx \les \eps t^{\frac32}.
\]
Since $X_0=0$, we have $\eps^2\tilde{\psi}(X_0)=0$. For the martingale term, we have
\[
\begin{aligned}
\E_B\{|\eps^2\int_0^{t/\eps^2}\tilde{\psi}'(X_s)\tilde{\sigma}(X_s)dB_s|^2\}=&\eps^4\int_0^{t/\eps^2}\E_B\{|\tilde{\psi}'(X_s)\tilde{\sigma}(X_s)|^2\}ds\\
\les & \eps^4\int_0^{t/\eps^2} \E_B\{|\tilde{\phi}(X_s)|^2\}ds.
\end{aligned}
\]
By a similar discussion, we have
\[
\begin{aligned}
\eps^4\int_0^{t/\eps^2} \E\E_B\{|\tilde{\phi}(X_s)|^2\}ds=&\eps^2\int_0^t\E\E_B\{|\tilde{\phi}(\eps X_{s/\eps^2}/\eps)|^2\}ds\\
\les &\eps^2\int_0^{t}\int_\R\frac{|x|}{\eps}\frac{1}{\sqrt{s}}e^{-c|x|^2/s}dxds\les \eps t^{\frac32},
\end{aligned}
\]
which completes the proof.
\end{proof}

\section{Weak convergence}
\label{s:wc}

Now we consider $v_{1,\eps},v_{2,\eps}$ and prove the weak convergence of $\eps^{-1/2}(v_{1,\eps}+v_{2,\eps})$. 

Let us recall that
\begin{eqnarray}
&&v_{1,\eps}(t,x)=\E_B\{f'(x+M_t^\eps)R_t^\eps\}\\
&&v_{2,\eps}(t,x)=\frac12\E_B\{f''(x+M_t^\eps)(\langle M^\eps\rangle_t-\bar{a}t)\}
\end{eqnarray}
and give a heuristic explanation of what we may expect from the weak convergence of $\eps^{-1/2}(v_{1,\eps}+v_{2,\eps})$.

For $v_{1,\eps}$, we can write 
\[
\frac{R_t^\eps}{\sqrt{\eps}}=-\sqrt{\eps}\tilde{\phi}(X_{t/\eps^2})=-\sqrt{\eps}\tilde{\phi}(\frac{\eps X_{t/\eps^2}}{\eps}).
\]
On one hand, since $\tilde{\phi}(x)=\bar{a}\int_0^x \tilde{V}(y)dy$ and $\tilde{V}$ is a mean-zero stationary random process with finite range of dependence, by a classical functional central limit theorem \cite[pages 178,179]{billingsley1} we have
\[
\sqrt{\eps}\tilde{\phi}(\frac{x}{\eps})\Rightarrow \bar{c}\W(x)
\]
weakly in $\C(\R)$, where 
\begin{equation}
\bar{c}=\hat{R}(0)^\frac12\bar{a},
\end{equation}
and $\W(x)$ is a two-sided Brownian motion with $\W(0)=0$, i.e.,
\[
\W(x)=\left\{\begin{array}{ll}
\W_1(x) & x\geq 0,\\
\W_2(-x) & x<0,
\end{array}
\right.
\] 
where $\W_1,\W_2$ are independent Brownian motions starting from the origin. On the other hand, by Proposition~\ref{p:qiv}, we have 
\[
\eps X_{t/\eps^2}\Rightarrow \bar{\sigma} W_t
\]
 in $\C(\R_+)$ for almost every $\omega$. Apparently $\W$ and $W$ are independent since they live in $(\Omega,\mathcal{F},\P)$ and $(\Sigma,\mathcal{A},\P_B)$ respectively, so we expect
\[
\frac{R_t^\eps}{\sqrt{\eps}}=-\sqrt{\eps}\tilde{\phi}(\frac{\eps X_{t/\eps^2}}{\eps})\Rightarrow -\bar{c}\W(\bar{\sigma}W_t)
\]
in distribution.

For $v_{2,\eps}$, we have
\[
\frac{\langle M^\eps\rangle_t-\bar{a}t}{\sqrt{\eps}}=\bar{a}^2 \eps^\frac32\int_0^{t/\eps^2}\tilde{V}(X_s)ds=\bar{a}^2\frac{1}{\sqrt{\eps}}\int_0^t \tilde{V}(\frac{\eps X_{s/\eps^2}}{\eps})ds,
\]
and since
\[
\frac{1}{\sqrt{\eps}}\int_0^x\tilde{V}(\frac{y}{\eps})dy\Rightarrow \hat{R}(0)^\frac12\W(x),
\]
we can formally write 
\[
\frac{1}{\sqrt{\eps}}\tilde{V}(\frac{y}{\eps})\Rightarrow \hat{R}(0)^\frac12\dot{\W}(x),
\]
where $\dot{\W}$ is the spatial white noise, and this implies
\[
\frac{1}{\sqrt{\eps}}\int_0^t \tilde{V}(\frac{\eps X_{s/\eps^2}}{\eps})ds\Rightarrow \hat{R}(0)^\frac12\int_0^t\dot{\W}(\bar{\sigma}W_s)ds=\hat{R}(0)^\frac12\int_\R L_t(x)\W(dx),
\]
with $L_t(x)$ the local time of $\bar{\sigma} W_t$ and $\W(dx)$ interpreted as the Wiener integral.

The above heuristic argument has already been made rigorous in \cite[Theorem 3.1]{iftimie2008homogenization}. More precisely, it was shown that for fixed $t>0$, 
\[
(\eps X_{t/\eps^2},  \frac{1}{\sqrt{\eps}}\int_0^t \tilde{V}(\frac{\eps X_{s/\eps^2}}{\eps})ds) \Rightarrow (\bar{\sigma}W_t, \hat{R}(0)^\frac12\int_\R L_t(x)\W(dx)),
\]
where $\bar{\sigma}W_t$ is a Brownian motion built on $(\Sigma,\mathcal{A},\P_B)$, $L_t(x)$ is its local time and $\W(dx)$ is spatial white noise built on $(\Omega,\mathcal{F},\P)$.

In the following, we follow their approach to show the convergence in distribution of $\eps^{-1/2}(v_{1,\eps}+v_{2,\eps})$. To make the argument self-contained, we will provide all details and make appropriate modifications.
 
To simplify the presentation, we will show the weak convergence of $v_{1,\eps}/\sqrt{\eps}$ and $v_{2,\eps}/\sqrt{\eps}$ separately, and in the end it is easy to observe that the proofs combine to show the weak convergence of $\eps^{-1/2}(v_{1,\eps}+v_{2,\eps})$.

\subsection{A decomposition of the probability space}

We decompose $\Omega$ as follows. Define 
\begin{equation}
\W_\eps(x):=\sqrt{\eps}\tilde{\phi}(x/\eps)/\bar{c},
\end{equation}
and since $\W_\eps\Rightarrow \W$ in $\C(\R)$, the family $\{\W_\eps\}$ is tight. For any fixed $\delta>0$, we can find a compact set $K\subset \C(\R)$ such that for all $\eps\in (0,1)$
\[
\P(\W_\eps\in K)>1-\delta.
\]
Clearly $K$ admits an open covering of the form 
\[
\cup_{g\in \C(\R)}\{h\in \C(\R): \sup_{x\in [-\delta^{-1},\delta^{-1}]}|h(x)-g(x)|< \delta\},
\]
so we can extract finitely many deterministic function $g_1,\ldots,g_N\in \C(\R)$ such that 
\[
K\subset \cup_{k=1}^N \{h\in \C(\R): \sup_{x\in [-\delta^{-1},\delta^{-1}]}|h(x)-g_k(x)|< \delta\}.
\]
It is clear that we can further assume $g_k$ is bounded (the bound depends on $\delta$ since $K$ depends on $\delta$). Define 
\begin{equation}
\tilde{B}_k^{\delta,\eps}=\{\omega\in\Omega: \sup_{x\in [-\delta^{-1},\delta^{-1}]}|\W_\eps(x)-g_k(x)|< \delta\},
\label{eq:cover}
\end{equation}
and let $B_1^{\delta,\eps}=\tilde{B}_1^{\delta,\eps}$ and for any $2\leq k\leq N$, 
\[
B_k^{\delta,\eps}=\tilde{B}_k^{\delta,\eps} \setminus \cup_{j=1}^{k-1} B_j^{\delta,\eps},
\]
so we have 
\[
\P(\cup_{k=1}^N \tilde{B}_k^{\delta,\eps})=\sum_{k=1}^N \P(B_k^{\delta,\eps})>1-\delta 
\]
for all $\eps\in (0,1)$. Let $A^{\delta,\eps}=\Omega\setminus \cup_{k=1}^N B_k^{\delta,\eps}$, so $\P(A^{\delta,\eps})<\delta$. Similarly, we define $\tilde{B}_k^{\delta}, B_k^\delta$ and $A^\delta$ with $\W_\eps$ replaced by $\W$ in \eqref{eq:cover}.

The above decomposition of the probability space helps to ``freeze'' the random environment, i.e., with a high probability and a high precision, we can use finitely many deterministic functions to approximate $\W_\eps$. This helps to pass to the limit with only the ``partial'' expectation $\E_B$. 

\subsection{Analysis of $v_{1,\eps}$}

According to the decomposition of the probability space, we have
\[
\begin{aligned}
\frac{v_{1,\eps}}{\sqrt{\eps}}=&- \bar{c}\E_B\{f'(x+M_t^\eps) \W_\eps (\eps X_{t/\eps^2})\}\\
=&-1_{\omega\in A^{\delta,\eps}} \bar{c}\E_B\{f'(x+M_t^\eps) \W_\eps (\eps X_{t/\eps^2})\}-\sum_{k=1}^N 1_{\omega\in B_k^{\delta,\eps}} \bar{c}\E_B\{f'(x+M_t^\eps) \W_\eps (\eps X_{t/\eps^2})\}\\
:=&(i)+(ii).
\end{aligned}
\]

For $(i)$, we first have
\[
|1_{\omega\in A^{\delta,\eps}} \E_B\{f'(x+M_t^\eps) \W_\eps (\eps X_{t/\eps^2})\}|\les 1_{\omega\in A^{\delta,\eps}}\E_B\{|\W_\eps (\eps X_{t/\eps^2})|\}.
\]
By taking $\E$ on both sides and applying Cauchy-Schwartz inequality and Lemma~\ref{lem:resm}, we obtain
\begin{equation}
\begin{aligned}
\E\{|1_{\omega\in A^{\delta,\eps}}\E_B\{f'(x+M_t^\eps) \W_\eps (\eps X_{t/\eps^2})\}|\}\les& \sqrt{\P(A^{\delta,\eps})\E\E_B\{|\W_\eps(\eps X_{t/\eps^2})|^2\}} \\
\les &\sqrt{\delta t^\frac12}.
\end{aligned}
\label{eq:esv1}
\end{equation}

For $(ii)$, we write each summand as 
\[
\begin{aligned}
&1_{\omega\in B_k^{\delta,\eps}}\E_B\{f'(x+M_t^\eps) \W_\eps (\eps X_{t/\eps^2})\}\\
=&1_{\omega\in B_k^{\delta,\eps}}\E_B\{f'(x+M_t^\eps) \W_\eps (\eps X_{t/\eps^2})(1_{|\eps X_{t/\eps^2}|\geq \delta^{-1}}+1_{|\eps X_{t/\eps^2}|< \delta^{-1}})\}.
\end{aligned}
\]
For the first part, we sum over $k$ and write
\[
|\E_B\{f'(x+M_t^\eps) \W_\eps (\eps X_{t/\eps^2})1_{|\eps X_{t/\eps^2}|\geq \delta^{-1}}\}|\les \E_B\{|\W_\eps(\eps X_{t/\eps^2})|1_{|\eps X_{t/\eps^2}|\geq \delta^{-1}}\},
\]
then the same proof as in Lemma~\ref{lem:resm} leads to
\begin{equation}
\E\E_B\{|\W_\eps(\eps X_{t/\eps^2})|1_{|\eps X_{t/\eps^2}|\geq \delta^{-1}}\}\les \int_{|x|\geq \delta^{-1}}\sqrt{|x|}\frac{1}{\sqrt{t}}e^{-c|x|^2/t}dx.
\label{eq:esv2}
\end{equation}
For the other part, we write
\[
\begin{aligned}
&1_{\omega\in B_k^{\delta,\eps}}\E_B\{f'(x+M_t^\eps) \W_\eps (\eps X_{t/\eps^2})1_{|\eps X_{t/\eps^2}|<\delta^{-1}}\}\\
=&1_{\omega\in B_k^{\delta,\eps}}\E_B\{f'(x+M_t^\eps) (\W_\eps-g_k)(\eps X_{t/\eps^2})1_{|\eps X_{t/\eps^2}|<\delta^{-1}}\}\\
&+1_{\omega\in B_k^{\delta,\eps}}\E_B\{f'(x+M_t^\eps) g_k (\eps X_{t/\eps^2})1_{|\eps X_{t/\eps^2}|<\delta^{-1}}\}\\
:=& (iii)+(iv).
\end{aligned}
\]

For $(iii)$, when $\omega\in B_k^{\delta,\eps}$, $|\W_\eps(x)-g_k(x)|<\delta$ for $|x|\leq \delta^{-1}$, so 
\begin{equation}
1_{\omega\in B_k^{\delta,\eps}}|\E_B\{f'(x+M_t^\eps) (\W_\eps-g_k)(\eps X_{t/\eps^2})1_{|\eps X_{t/\eps^2}|<\delta^{-1}}\}|\les \delta 1_{\omega\in B_k^{\delta,\eps}}.
\label{eq:esv3}
\end{equation}

For $(iv)$, we apply the quenched invariance principle of $\eps X_{t/\eps^2}$ (and $M_t^\eps$) to obtain that for almost every $\omega$, 
\begin{equation}
\begin{aligned}
&\E_B\{f'(x+M_t^\eps) g_k (\eps X_{t/\eps^2})1_{|\eps X_{t/\eps^2}|<\delta^{-1}}\}\\
\to& \E_W\{f'(x+\bar{\sigma}W_t)g_k(\bar{\sigma}W_t)1_{|\bar{\sigma}W_t|<\delta^{-1}}\}
\label{eq:qiv1}
\end{aligned}
\end{equation}
as $\eps\to 0$. Furthermore, by the weak convergence of $\W_\eps\Rightarrow \W$, we have
\[
1_{\omega\in B_k^{\delta,\eps}}\Rightarrow 1_{\omega\in B_k^\delta}
\]
in distribution as $\eps\to 0$, since the measure induced by $\W$ on $\C(\R)$ does not support on the boundary of the set $\{h\in \C(\R): \sup_{x\in [-\delta^{-1},\delta^{-1}]}|h(x)-g_k(x)|<\delta\}$. Therefore, we have 
\begin{equation}
\begin{aligned}
&1_{\omega\in B_k^{\delta,\eps}}\E_B\{f'(x+M_t^\eps) g_k (\eps X_{t/\eps^2})1_{|\eps X_{t/\eps^2}|<\delta^{-1}}\}\\\Rightarrow&1_{\omega\in B_k^{\delta}}\E_W\{f'(x+\bar{\sigma}W_t)g_k(\bar{\sigma}W_t)1_{|\bar{\sigma}W_t|<\delta^{-1}}\}
\end{aligned}
\end{equation}
in distribution as $\eps\to 0$. 

To summarize, we can write 
\[
\frac{v_{1,\eps}(t,x)}{\sqrt{\eps}}=-\sum_{k=1}^N1_{\omega\in B_k^{\delta,\eps}}\bar{c}\E_B\{f'(x+M_t^\eps) g_k (\eps X_{t/\eps^2})1_{|\eps X_{t/\eps^2}|<\delta^{-1}}\}+R_{\delta,\eps}
\]
with the first part converges in distribution and $\E\E_B\{|R_{\delta,\eps}|\}
\to 0$ uniformly in $\eps$ as $\delta\to 0$. Now if we write
\[
\begin{aligned}
\E_W\{f'(x+\bar{\sigma}W_t)\W(\bar{\sigma}W_t)\}=&1_{\omega\in A^\delta}\E_W\{f'(x+\bar{\sigma}W_t)\W(\bar{\sigma}W_t)\}\\
&+\sum_{k=1}^N 1_{\omega\in B_k^\delta}\E_W\{f'(x+\bar{\sigma}W_t)\W(\bar{\sigma}W_t)1_{|\bar{\sigma}W_t|\geq \delta^{-1}}\}\\
&+\sum_{k=1}^N 1_{\omega\in B_k^\delta}\E_W\{f'(x+\bar{\sigma}W_t)(\W-g_k)(\bar{\sigma}W_t)1_{|\bar{\sigma}W_t|< \delta^{-1}}\}\\
&+\sum_{k=1}^N 1_{\omega\in B_k^\delta}\E_W\{f'(x+\bar{\sigma}W_t)g_k(\bar{\sigma}W_t)1_{|\bar{\sigma}W_t|< \delta^{-1}}\},
\end{aligned}
\]
by the same discussion, we have similar estimates for the first three terms in the above expression as in \eqref{eq:esv1}, \eqref{eq:esv2} and \eqref{eq:esv3}. Now we only need to send $\delta\to 0$ to conclude that 
\begin{equation}
\frac{v_{1,\eps}(t,x)}{\sqrt{\eps}}\Rightarrow -\bar{c}\E_W\{f'(x+\bar{\sigma}W_t)\W(\bar{\sigma}W_t)\}
\label{eq:conv1}
\end{equation}
in distribution as $\eps\to0$.

\subsection{Analysis of $v_{2,\eps}$}

By the proof of Lemma~\ref{lem:masm}, we have
\[
\begin{aligned}
\frac{\langle M^\eps\rangle_t-\bar{a}t}{\bar{a}^2\sqrt{\eps}}=&\eps^{\frac32}\int_0^{t/\eps^2}\tilde{V}(X_s)ds\\
=&-\eps^\frac32\tilde{\psi}(X_{t/\eps^2})+\eps^\frac32\int_0^{t/\eps^2}\tilde{\psi}'(X_s)\tilde{\sigma}(X_s)dB_s,
\end{aligned}
\]
with $\tilde{\psi}$ given by
\[
\tilde{\psi}(x)=-\frac{2}{\bar{a}}\int_0^x \frac{\tilde{\phi}(y)}{\tilde{a}(y)}dy.
\]
Since $\W_\eps(x)=\sqrt{\eps}\tilde{\phi}(x/\eps)/\bar{c}$, we can write
\begin{equation}
\frac{\langle M^\eps\rangle_t-\bar{a}t}{\bar{a}^2\sqrt{\eps}}=\frac{2\bar{c}}{\bar{a}}\left(\int_0^{\eps X_{t/\eps^2}} \frac{\W_\eps(y)}{\tilde{a}(y/\eps)}dy-\int_0^t\frac{\W_\eps(\eps X_{s/\eps^2})}{\tilde{\sigma}(\eps X_{s/\eps^2}/\eps)}d\tilde{B}_s\right),
\label{eq:maqua}
\end{equation}
where $\tilde{B}_s:=\eps B_{s/\eps^2}$. 

The idea is the same as for $v_{1,\eps}$, i.e., we decompose the probability $\Omega$ as 
\[
\Omega=(\cup_{k=1}^N B_k^{\delta,\eps})\cup A^{\delta,\eps}
\]
and for $\omega\in B_k^{\delta,\eps}$, we use the deterministic function $g_k$ to approximate $\W_\eps$ and pass to the limit by the invariance principle of $\eps X_{t/\eps^2}$.

First, for $\omega\in A^{\delta,\eps}$, by Cauchy-Schwartz inequality and Lemma~\ref{lem:masm}, we have
\begin{equation}
\E\{|1_{\omega\in A^{\delta,\eps}}\E_B\{f''(x+M_t^\eps)(\langle M^\eps\rangle_t-\bar{a}t)/\sqrt{\eps}\}|\}\les \sqrt{\delta t^\frac32}.
\label{eq:esv4}
\end{equation}

Secondly, we apply Cauchy-Schwartz inequality and \eqref{eq:heatkp} to derive that
\[
\begin{aligned}
&|\E_B\{1_{\sup_{s\in [0,t]}|\eps X_{s/\eps^2}|\geq \delta^{-1}}f''(x+M_t^\eps)(\langle M^\eps\rangle_t-\bar{a} t)/\sqrt{\eps} \}|^2\\
\les & \E_B\{1_{\sup_{s\in [0,t]}|\eps X_{s/\eps^2}|\geq \delta^{-1}}\}\E_B\{ (\langle M^\eps\rangle_t-\bar{a} t)^2/\eps\}\\
\les &  e^{-c\delta^{-2}/t}\E_B\{ (\langle M^\eps\rangle_t-\bar{a} t)^2/\eps\}.
\end{aligned}
\]
By taking $\E$ on both sides and applying Lemma~\ref{lem:masm}, we obtain
\begin{equation}
|\E\E_B\{1_{\sup_{s\in [0,t]}|\eps X_{s/\eps^2}|\geq \delta^{-1}}f''(x+M_t^\eps)(\langle M^\eps\rangle_t-\bar{a} t)/\sqrt{\eps} \}|\les t^{\frac34}e^{-c\delta^{-2}/t}.
\label{eq:esv5}
\end{equation}


Now we define
\[
\cG(\W_\eps):=\frac{\langle M^\eps\rangle_t-\bar{a}t}{\sqrt{\eps}}=2\bar{c}\bar{a}\left(\int_0^{\eps X_{t/\eps^2}} \frac{\W_\eps(y)}{\tilde{a}(y/\eps)}dy-\int_0^t\frac{\W_\eps(\eps X_{s/\eps^2})}{\tilde{\sigma}(\eps X_{s/\eps^2}/\eps)}d\tilde{B}_s\right),
\]
and consider the error induced by replacing $\W_\eps\to g_k$ when $\omega\in B_k^{\delta,\eps}$. Since 
\[
|\W_\eps(y)-g_k(y)|\leq \delta
\]
for $y\in [-\delta^{-1},\delta^{-1}]$ and $\omega\in B_k^{\delta,\eps}$, we have
\begin{equation}
\begin{aligned}
&1_{\omega\in B_k^{\delta,\eps}} |\E_B\{1_{\sup_{s\in [0,t]} |\eps X_{s/\eps^2}|<\delta^{-1}}f''(x+M_t^\eps)(\cG(\W_\eps)-\cG(g_k))\}|\\
\les &1_{\omega\in B_k^{\delta,\eps}} \left(\E_B\{ |\eps X_{t/\eps^2}|\delta\}+\sqrt{\E_B\{\int_0^t (\W_\eps-g_k)^2(\eps X_{s/\eps^2})1_{|\eps X_{s/\eps^2}|< \delta^{-1}}ds\}}\right)\\
\les & 1_{\omega\in B_k^{\delta,\eps}}\left(\E_B\{|\eps X_{t/\eps^2}|\delta\}+\delta \sqrt{t}\right)\les 1_{\omega\in B_k^{\delta,\eps}} \delta\sqrt{t}.
\end{aligned}
\label{eq:esv6}
\end{equation}
Here we have used the simple fact that
\[
\begin{aligned}
&1_{\sup_{s\in [0,t]}|\eps X_{s/\eps^2}|<\delta^{-1}}|\int_0^t \frac{(\W_\eps-g_k)(\eps X_{s/\eps^2})}{\tilde{\sigma}(\eps X_{s/\eps^2}/\eps)}d\tilde{B}_s|\\
\leq&|\int_0^{t\wedge \tau_\delta}\frac{(\W_\eps-g_k)(\eps X_{s/\eps^2})}{\tilde{\sigma}(\eps X_{s/\eps^2}/\eps)}d\tilde{B}_s|
\end{aligned}
\]
and It\^o's isometry, where $\tau_\delta:=\inf\{ s\geq 0: \eps X_{s/\eps^2}\geq \delta^{-1}\}$.


By combining \eqref{eq:esv4}, \eqref{eq:esv5}, and \eqref{eq:esv6}, it is clear that
\begin{equation}
\frac{v_{2,\eps}(t,x)}{\sqrt{\eps}}=\frac12\sum_{k=1}^N 1_{\omega\in B_k^{\delta,\eps}}\E_B\{1_{\sup_{s\in [0,t]}|\eps X_{s/\eps^2}|<\delta^{-1}}f''(x+M_t^\eps)\cG(g_k)\}+R_{\delta,\eps}
\label{eq:edev2}
\end{equation}
with $R_{\delta,\eps}\to 0$ in $L^1(\Omega)$ as $\delta\to 0$ uniformly in $\eps$. 

Now we consider
\[
1_{\omega\in B_k^{\delta,\eps}}\E_B\{1_{\sup_{s\in [0,t]}|\eps X_{s/\eps^2}|<\delta^{-1}}f''(x+M_t^\eps)\cG(g_k)\}
\]
for fixed $k$. As before, for $1_{\omega\in B_k^{\delta,\eps}}\Rightarrow 1_{\omega\in B_k^\delta}$ in distribution by the weak convergence of $\W_\eps\Rightarrow \W$, so we only need to prove the convergence of 
\[
\E_B\{1_{\sup_{s\in [0,t]}|\eps X_{s/\eps^2}|<\delta^{-1}}f''(x+M_t^\eps)\cG(g_k)\}.
\]

Let 
\[
\begin{aligned}
\cG(g_k)=&
2\bar{c}\bar{a}\left(\int_0^{\eps X_{t/\eps^2}} \frac{g_k(y)}{\tilde{a}(y/\eps)}dy-\int_0^t\frac{g_k(\eps X_{s/\eps^2})}{\tilde{\sigma}(\eps X_{s/\eps^2}/\eps)}d\tilde{B}_s\right)\\
:=&2\bar{c}\bar{a}(\mathcal{R}^\eps_t-\mathcal{M}^\eps_t).
\end{aligned}
\]

For $\mathcal{R}_t^\eps$, by ergodic theorem, we have
\[
1_{\sup_{s\in [0,t]}|\eps X_{s/\eps^2}|<\delta^{-1}}\int_0^{\eps X_{t/\eps^2}}g_k(y)\left(\frac{1}{\tilde{a}(y/\eps)}-\frac{1}{\bar{a}}\right)dy\to 0
\]
as $\eps\to0$ for almost every $\omega$ and $\tilde{B}_s$. Thus by the quenched convergence of $\eps X_{t/\eps^2}\Rightarrow \bar{\sigma}W_t$ in $\C(\R_+)$, we have
\begin{equation}
\begin{aligned}
&\E_B\{ 1_{\sup_{s\in [0,t]} |\eps X_{s/\eps^2}|<\delta^{-1}}f''(x+M_t^\eps) \mathcal{R}_t^\eps\}\\
\to &\E_W\{1_{\sup_{s\in [0,t]} |\bar{\sigma}W_s|<\delta^{-1}}f''(x+\bar{\sigma}W_t) \frac{1}{\bar{a}}\int_0^{\bar{\sigma}W_t}g_k(y)dy\}
\end{aligned}
\label{eq:conr1}
\end{equation}
for almost every $\omega$.

For the continuous martingale 
\[
\mathcal{M}^\eps_t=\int_0^t \frac{g_k(\eps X_{s/\eps^2})}{\tilde{\sigma}(\eps X_{s/\eps^2}/\eps)}d\tilde{B}_s,
\]
its quadratic variation is given by
\[
\langle \mathcal{M}^\eps\rangle_t=\int_0^t \frac{g_k^2(\eps X_{s/\eps^2})}{\tilde{a}(\eps X_{s/\eps^2}/\eps)}ds,
\]
and recall that in the proof of Proposition~\ref{p:qiv}, $\eps X_{t/\eps^2}$ is decomposed as $\eps X_{t/\eps^2}=R_t^\eps+M_t^\eps$
with 
\[
M_t^\eps=\bar{a}\int_0^t \frac{1}{\tilde{\sigma}(\eps X_{s/\eps^2}/\eps)}d\tilde{B}_s,
\]
 so the joint quadratic variation of $\mathcal{M}^\eps_t$ and $M_t^\eps$ is 
\[
\langle \mathcal{M}^\eps,M^\eps\rangle_t=\bar{a}\int_0^t \frac{g_k(\eps X_{s/\eps^2})}{\tilde{a}(\eps X_{s/\eps^2}/\eps)}ds.
\]
The following lemma shows the joint convergence of $\M^\eps_t$ and $M_t^\eps$.

\begin{lem}
For almost every $\omega$, 
\[
(\M^\eps_t,M_t^\eps)\Rightarrow (\frac{1}{\bar{\sigma}}\int_0^t g_k(\bar{\sigma}W_s)dW_s, \bar{\sigma} W_t)
\]
in $\C(\R_+)$ as $\eps\to 0$.
\label{lem:conma}
\end{lem}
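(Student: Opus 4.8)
The plan is to prove joint convergence of the pair $(\mathcal{M}^\eps_t, M_t^\eps)$ by the martingale functional central limit theorem, applied to the two-dimensional continuous martingale obtained from stacking the two components. Both $\mathcal{M}^\eps_t$ and $M_t^\eps$ are continuous martingales with respect to the filtration generated by $\tilde{B}_s = \eps B_{s/\eps^2}$, so by the multidimensional version of the martingale CLT (see, e.g., \cite[Theorem 7.1.4]{KLO-SP-12} or the classical reference) it suffices to show that the matrix of quadratic and cross-variations converges, for almost every $\omega$, to the deterministic matrix corresponding to the limiting Gaussian process. Concretely, I would check the three convergences
\begin{align}
\la \mathcal{M}^\eps\ra_t &= \int_0^t \frac{g_k^2(\eps X_{s/\eps^2})}{\tilde{a}(\eps X_{s/\eps^2}/\eps)}\,ds \longrightarrow \frac{1}{\bar a}\int_0^t g_k^2(\bar\sigma W_s)\,ds = \frac{1}{\bar\sigma^2}\int_0^t g_k^2(\bar\sigma W_s)\,ds,\notag\\
\la \mathcal{M}^\eps, M^\eps\ra_t &= \bar a \int_0^t \frac{g_k(\eps X_{s/\eps^2})}{\tilde{a}(\eps X_{s/\eps^2}/\eps)}\,ds \longrightarrow \int_0^t g_k(\bar\sigma W_s)\,ds,\notag\\
\la M^\eps\ra_t &\longrightarrow \bar a t = \bar\sigma^2 t,\notag
\end{align}
all for almost every $\omega$, as $\eps \to 0$. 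The last of these is exactly the content established in the proof of Proposition~\ref{p:qiv}, so only the first two are new.

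To obtain the first two convergences I would use the homogenization/ergodic-theorem machinery that is already in place. Each of these time integrals has the form $\int_0^t h(\eps X_{s/\eps^2}, \tau_{X_{s/\eps^2}}\omega)\,ds$, where the fast-scale dependence enters through the environmental process $\omega_s = \tau_{X_s}\omega$, which by the Proposition following \eqref{eq:sde} is ergodic and reversible with respect to $\P$. The key point is that $g_k$ is a fixed deterministic continuous function, so on the event $\{\sup_{s\le t}|\eps X_{s/\eps^2}| < \delta^{-1}\}$ (which is where the lemma is ultimately applied) $g_k$ and $g_k^2$ are uniformly continuous with values in a bounded set; combining the ergodic theorem for $\omega_s$ (to average out $\tilde{a}(X_s)^{-1}$ against the invariant measure, giving $\bar a^{-1} = \E\{\tilde a^{-1}\}$) with the slow variation of $s\mapsto g_k(\eps X_{s/\eps^2})$ and the quenched invariance principle $\eps X_{\cdot/\eps^2} \Rightarrow \bar\sigma W$ yields the stated limits. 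This is the same split — slow deterministic factor times rapidly oscillating ergodic average — used for the quadratic variation of $M^\eps$ in Proposition~\ref{p:qiv} and for $\mathcal R^\eps_t$ in \eqref{eq:conr1}; I would note that it passes to a statement of almost sure convergence (in the Brownian motion) and then in $\C(\R_+)$ by the usual localization and Arzel\`a--Ascoli / Prokhorov tightness argument for continuous martingales with convergent brackets.

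Having the convergence of the bracket matrix, the martingale CLT identifies the limit of $(\mathcal{M}^\eps, M^\eps)$ as the unique continuous Gaussian (in fact Brownian with respect to time, time-changed) martingale $(\mathcal{M}, M)$ with $\la M\ra_t = \bar\sigma^2 t$, $\la \mathcal{M}, M\ra_t = \int_0^t g_k(\bar\sigma W_s)\,ds$, and $\la\mathcal{M}\ra_t = \bar\sigma^{-2}\int_0^t g_k^2(\bar\sigma W_s)\,ds$, where $W$ is the limiting Brownian motion appearing in Proposition~\ref{p:qiv}. One then verifies that $M_t = \bar\sigma W_t$ and $\mathcal{M}_t = \bar\sigma^{-1}\int_0^t g_k(\bar\sigma W_s)\,dW_s$ have exactly this bracket structure: indeed $\la \bar\sigma^{-1}\int_0^\cdot g_k(\bar\sigma W_s)\,dW_s, \bar\sigma W\ra_t = \bar\sigma\cdot\bar\sigma^{-1}\int_0^t g_k(\bar\sigma W_s)\,ds = \int_0^t g_k(\bar\sigma W_s)\,ds$ and $\la \bar\sigma^{-1}\int_0^\cdot g_k(\bar\sigma W_s)\,dW_s\ra_t = \bar\sigma^{-2}\int_0^t g_k^2(\bar\sigma W_s)\,ds$, matching. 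By uniqueness of the martingale problem this pins down the limit, giving the claimed convergence.

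The main obstacle I anticipate is not the abstract martingale CLT but the careful justification of the \emph{joint} almost-sure (in $\omega$) convergence of the cross-bracket $\la\mathcal{M}^\eps, M^\eps\ra_t$: one needs to interchange the ergodic averaging of the environmental process with the slow variation coming from $g_k(\eps X_{s/\eps^2})$ along a \emph{single} diffusion path, uniformly enough in time to get convergence in $\C(\R_+)$ rather than just pointwise in $t$. This requires the same quantitative control (heat kernel bounds \eqref{eq:heatk}, \eqref{eq:heatkp}, sublinear growth of the corrector) that was used in Proposition~\ref{p:qiv}, together with an $L^2(\P)$-type estimate on the ergodic fluctuations of $\eps^2\int_0^{t/\eps^2}(\tilde a(X_s)^{-1} - \bar a^{-1})\,ds$ — essentially Lemma~\ref{lem:masm} applied with $g_k$ weights — to upgrade convergence in probability to the almost sure statement along a suitable subsequence and then for all $\eps$ by monotonicity/continuity arguments. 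Since the same structure has already been handled for the unweighted bracket, I would carry out this step by direct reduction to the estimates proved in Section~\ref{s:err}, inserting the bounded factor $g_k$ without essential change.
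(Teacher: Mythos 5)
Your overall skeleton is the same as the paper's: treat $(\M^\eps,M^\eps)$ as a continuous martingale pair, prove convergence of the bracket matrix $\la\M^\eps\ra$, $\la\M^\eps,M^\eps\ra$, $\la M^\eps\ra$, and invoke the martingale CLT, identifying the limit through its brackets relative to $W$. That identification step you carry out correctly. Two points in your write-up, however, are not right as stated and would need repair.

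First, the limiting brackets are \emph{not} deterministic (they are functionals of $\bar\sigma W$), and the limit $\bar{\sigma}^{-1}\int_0^t g_k(\bar\sigma W_s)\,dW_s$ is not a Gaussian martingale; this is mostly a labelling issue since your actual argument goes through the bracket structure and uniqueness, but the ``deterministic matrix / Gaussian limit'' form of the martingale CLT you gesture at does not apply here--one needs the version with random limiting brackets, which requires \emph{joint} convergence in law of the brackets together with $M^\eps$. Second, and more substantively, your plan to get the bracket convergence ``almost surely in the Brownian motion'' (and to upgrade convergence in probability via Lemma~\ref{lem:masm}-type $L^2(\P)$ estimates along subsequences) cannot work: for fixed $\omega$ the path $\eps X_{\cdot/\eps^2}$ converges only in distribution under $\P_B$, not almost surely or in probability on $(\Sigma,\mathcal A,\P_B)$, and the putative limits $\frac1{\bar a}\int_0^t g_k^2(\bar\sigma W_s)\,ds$, $\int_0^t g_k(\bar\sigma W_s)\,ds$ live on the limiting space, so there is no a.s.\ statement to upgrade to. The paper's device avoids this entirely: write $\la\M^\eps\ra$ and $\la\M^\eps,M^\eps\ra$ as $\int_0^\cdot g_k^2(\eps X_{s/\eps^2})\,dh_\eps(s)$ and $\bar a\int_0^\cdot g_k(\eps X_{s/\eps^2})\,dh_\eps(s)$ with $h_\eps(t)=\int_0^t \tilde a(\eps X_{s/\eps^2}/\eps)^{-1}ds$; the ergodic theorem gives $h_\eps\Rightarrow t/\bar a$ quenched, the quenched invariance principle gives $g_k(\eps X_{\cdot/\eps^2})\Rightarrow g_k(\bar\sigma W_\cdot)$, and the continuity of $(h_1,h_2)\mapsto\int_0^\cdot h_1\,dh_2$ for increasing $h_2$ (Lemma 3.5 of \cite{iftimie2008homogenization}) plus the continuous mapping theorem yields the joint distributional convergence of the brackets in $\C(\R_+)$, which is exactly what the martingale CLT needs. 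If you replace your ``a.s.\ upgrade'' step by this continuous-mapping argument (or a Skorokhod representation), your proof matches the paper's.
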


\begin{proof}
We first consider the process on $\C(\R_+)$ defined by
\[
h_\eps(t):=\int_0^t \frac{1}{\tilde{a}(\eps X_{s/\eps^2}/\eps)}ds=\eps^2\int_0^{t/\eps^2}\frac{1}{a(\tau_{X_s}\omega)}ds.
\]
Since $a^{-1}$ is bounded, $h_\eps(.)$ is tight in $\C(\R_+)$, and by ergodic theorem, its finite dimensional distribution converges to that of $\bar{h}(t):=t/\bar{a}$. Therefore, for almost every $\omega$, we have $h_\eps\Rightarrow h$ in $\C(\R_+)$.

Secondly, since $\eps X_{t/\eps^2}\Rightarrow \sigma W_t$ in $\C(\R_+)$, we have $g_k^2(\eps X_{s/\eps^2})\Rightarrow g_k^2(\bar{\sigma} W_s)$ in $\C(\R_+)$. Now by \cite[Lemma 3.5]{iftimie2008homogenization}, the following mapping is continuous from $\C(\R_+)\times \C(\R_+)\to \C(\R_+)$:
\[
(h_1,h_2)\mapsto \int_0^. h_1(s)dh_2(s),
\]
when $h_2$ is increasing and $\C(\R_+)$ is equipped with the locally uniform topology. This implies 
\[
\langle \M^\eps\rangle_t=\int_0^t \frac{g_k^2(\eps X_{s/\eps^2})}{\tilde{a}(\eps X_{s/\eps^2}/\eps)}ds\Rightarrow \frac{1}{\bar{a}}\int_0^tg_k^2(\bar{\sigma}W_s)ds
\]
in $\C(\R_+)$. Similarly, 
\[
\langle \M^\eps,M^\eps\rangle_t=\bar{a}\int_0^t \frac{g_k(\eps X_{s/\eps^2})}{\tilde{a}(\eps X_{s/\eps^2}/\eps)}ds\Rightarrow \int_0^t g_k(\bar{\sigma}W_s)ds
\]
in $\C(\R_+)$.

Now given the fact that 
\[
\langle M^\eps\rangle_t\Rightarrow \bar{a}t,
\]
we conclude by martingale central limit theorem that 
\[
(\M^\eps_t,M^\eps_t)\Rightarrow (\frac{1}{\bar{\sigma}}\int_0^t g_k(\bar{\sigma}W_s)dW_s, \bar{\sigma} W_t)
\]
in $\C(\R_+)$ for almost every $\omega$. The proof is complete.
\end{proof}

From Lemma~\ref{lem:conma} it is clear that 
\[
\begin{aligned}
&1_{\sup_{s\in[0,t]}|\eps X_{s/\eps^2}|<\delta^{-1}} f''(x+M_t^\eps) \M^\eps_t\\
\Rightarrow & 1_{\sup_{s\in[0,t]}|\bar{\sigma}W_t|<\delta^{-1}} f''(x+\bar{\sigma} W_t) \frac{1}{\bar{\sigma}}\int_0^t g_k(\bar{\sigma}W_s)dW_s
\end{aligned}
\]
in distribution for almost every $\omega$. Now we only need to note
\[
1_{\sup_{s\in[0,t]}|\eps X_{s/\eps^2}|<\delta^{-1}} | \M^\eps_t|\leq |\int_0^{t\wedge \tau_\delta}\frac{g_k(\eps X_{s/\eps^2})}{\tilde{\sigma}(\eps X_{s/\eps^2}/\eps)}d\tilde{B}_s|,
\]
which implies $\E_B\{1_{\sup_{s\in[0,t]}|\eps X_{s/\eps^2}|<\delta^{-1}}|\M_t^\eps|^2\}$ is uniformly bounded in $\eps$, so
\begin{equation}
\begin{aligned}
&\E_B\{ 1_{\sup_{s\in[0,t]}|\eps X_{s/\eps^2}|<\delta^{-1}} f''(x+M_t^\eps) \M^\eps_t\}\\
\to & \E_W\{1_{\sup_{s\in[0,t]}|\bar{\sigma} W_s|<\delta^{-1}} f''(x+\bar{\sigma}W_t)\frac{1}{\bar{\sigma}}\int_0^t g_k(\bar{\sigma}W_s)dW_s\}
\end{aligned}
\label{eq:conm1}
\end{equation}
for almost every $\omega\in \Omega$.

Combining \eqref{eq:conr1} and \eqref{eq:conm1}, we obtain
\begin{equation}
\begin{aligned}
&\sum_{k=1}^N 1_{\omega\in B_k^{\delta,\eps}}\E_B\{ 1_{\sup_{s\in [0,t]}|\eps X_{s/\eps^2}|<\delta^{-1}}f''(x+M_t^\eps) \cG(g_k)\}\\
\Rightarrow &\sum_{k=1}^N 1_{\omega\in B_k^\delta}\E_W\{1_{\sup_{s\in [0,t]}|\bar{\sigma}W_s|<\delta^{-1}}f''(x+\bar{\sigma}W_t)\left(2\bar{c}\int_0^{\bar{\sigma}W_t}g_k(y)dy-2\bar{c}\bar{\sigma}\int_0^t g_k(\bar{\sigma}W_s)dW_s\right)\}
\end{aligned}
\label{eq:conv2cut}
\end{equation}
in distribution as $\eps\to 0$.

Now we send $\delta\to 0$ on the r.h.s. of \eqref{eq:conv2cut} to obtain 
\[
\mbox{r.h.s. of } \eqref{eq:conv2cut} \to \E_W\{f''(x+\bar{\sigma}W_t)\left(2\bar{c}\int_0^{\bar{\sigma}W_t}\W(y)dy-2\bar{c}\bar{\sigma}\int_0^t \W(\bar{\sigma}W_s)dW_s\right)\}
\]
in $L^1(\Omega)$. The discussion is the same as for \eqref{eq:esv4}, \eqref{eq:esv5} and \eqref{eq:esv6}.

To summarize, we have 
\begin{equation}
\frac{v_{2,\eps}(t,x)}{\sqrt{\eps}}\Rightarrow \E_W\{f''(x+\bar{\sigma}W_t)\left(\bar{c}\int_0^{\bar{\sigma}W_t}\W(y)dy-\bar{c}\bar{\sigma}\int_0^t \W(\bar{\sigma}W_s)dW_s\right)\}.
\label{eq:conv2}
\end{equation}

Now we note that the proofs of \eqref{eq:conv1} and \eqref{eq:conv2} can actually be combined together to show that
\begin{equation}
\begin{aligned}
\frac{v_{1,\eps}(t,x)+v_{2,\eps}(t,x)}{\sqrt{\eps}} \Rightarrow&-\bar{c}\E_W\{f'(x+\bar{\sigma}W_t)\W(\bar{\sigma}W_t)\}\\
&+\E_W\{f''(x+\bar{\sigma}W_t)\left(\bar{c}\int_0^{\bar{\sigma}W_t}\W(y)dy-\bar{c}\bar{\sigma}\int_0^t \W(\bar{\sigma}W_s)dW_s\right)\}.
\end{aligned}
\label{eq:conv1v2}
\end{equation}

By \cite[Lemma 3.12]{iftimie2008homogenization}, we have
\begin{equation}
\bar{c}\int_0^{\bar{\sigma}W_t}\W(y)dy-\bar{c}\bar{\sigma}\int_0^t \W(\bar{\sigma}W_s)dW_s=\frac12\hat{R}(0)^\frac12\bar{a}^2\int_\R L_t(x)\W(dx),
\label{eq:ito}
\end{equation}
with $L_t(x)$ the local time of $\bar{\sigma}W_t$. For simplicity we formally write the r.h.s. of the above expression as
\[
\frac12\hat{R}(0)^\frac12\bar{a}^2\int_\R L_t(x)\W(dx)=\frac12\hat{R}(0)^\frac12\bar{a}^2\int_0^t \dot{\W}(\bar{\sigma} W_s)ds,
\]
so \eqref{eq:ito} can be viewed as an application of It\^o's formula.

Now \eqref{eq:conv1v2} is rewritten as (recall that $\bar{c}=\hat{R}(0)^\frac12\bar{a}$)
\begin{equation}
\begin{aligned}
\frac{v_{1,\eps}(t,x)+v_{2,\eps}(t,x)}{\sqrt{\eps}} \Rightarrow&-\hat{R}(0)^\frac12\bar{a}\E_W\{f'(x+\bar{\sigma}W_t)\W(\bar{\sigma}W_t)\}\\
&+\frac12\hat{R}(0)^\frac12\bar{a}^2\E_W\{f''(x+\bar{\sigma}W_t)\int_0^t \dot{\W}(\bar{\sigma} W_s)ds\}.
\label{eq:connew}
\end{aligned}
\end{equation}

\section{Discussion}
\label{s:dis}

\subsection{The shift of the random environment and global weak convergence}

The weak convergence in \eqref{eq:connew} is for fixed $(t,x)$. Now we discuss the convergence of finite dimensional distributions of $v(t,x)$ as a process in $(t,x)$. 

From the beginning, we have shifted the random environment $\omega$ by $\tau_{-x/\eps}$, and this is only used when applying martingale central limit theorem to obtain \emph{quenched} weak convergence. The reason is that we need the environmental process $\omega_s$ to be independent of $\eps$, and the shift of the environment enables the process $\omega_s=\tau_{X_s}\omega$ to start from $\omega$ instead of $\tau_{x/\eps}\omega$.

Retracing the proof, the shift of the environment is used in proving Proposition~\ref{p:qiv}, \eqref{eq:qiv1}, \eqref{eq:conr1}, Lemma~\ref{lem:conma} and \eqref{eq:conm1} to get almost sure convergence, which is sufficient but unnecessary. For example, in \eqref{eq:qiv1}, \eqref{eq:conr1} and \eqref{eq:conm1}, a convergence in probability suffices to pass to the limit, which itself could come from an $L^1(\Omega)$ convergence, since the $L^1(\Omega)$ error does not depend on where the environmental process starts. Therefore, by almost the same proof, we have a convergence of finite dimensional distributions, except that now the limit in \eqref{eq:connew} should encode the dependence on $x$.

Without the shift, by the same proof \eqref{eq:connew} becomes
\begin{equation}
\frac{v_{1,\eps}(t,x)+v_{2,\eps}(t,x)}{\sqrt{\eps}}\Rightarrow v(t,x),
\end{equation}
with 
\begin{equation}
\begin{aligned}
v(t,x)=&-\hat{R}(0)^\frac12\bar{a}\E_W\{f'(x+\bar{\sigma}W_t)(\W(x+\bar{\sigma}W_t)-\W(x))\}\\
&+\frac12\hat{R}(0)^\frac12\bar{a}^2\E_W\{f''(x+\bar{\sigma}W_t)\int_0^t \dot{\W}(x+\bar{\sigma} W_s)ds\}.
\end{aligned}
\label{eq:conv1v2n}
\end{equation}
Comparing with \eqref{eq:conv1v2n} and \eqref{eq:connew}, formally it is only to shift $\dot{\W}(y)\mapsto \dot{\W}(x+y)$, i.e.,
\[
\W(\bar{\sigma} W_t)=\int_0^{\bar{\sigma}W_t}\dot{\W}(y)dy\mapsto \int_0^{\bar{\sigma}W_t}\dot{\W}(x+y)dy=\W(x+\bar{\sigma}W_t)-\W(x),
\]
and
\[
\int_0^t\dot{\W}(\bar{\sigma}W_s)ds\mapsto \int_0^t \dot{\W}(x+\bar{\sigma}W_s)ds.
\]
Since the spatial white noise $\dot{\W}$ is stationary, the pointwise distribution is unchanged.

Now we prove the ``global'' weak convergence, i.e., a spatial average of the homogenization error $u_\eps-u_{\hom}$ of the form 
\[
\frac{1}{\sqrt{\eps}}\int_\R (u_\eps(t,x)-u_{\hom}(t,x))g(x)dx
\]
 converges for every test function $g\in \C_c^\infty(\R)$.

First, we point out that the estimates derived in the proofs of Lemma~\ref{lem:resm} and \ref{lem:masm} grows polynomially with respect to $|x|$ when we have $X_0=x/\eps$ instead of $X_0=0$. Since $g$ is fast-decaying, we have
\[
\int_\R \E\{ |(u_\eps(t,x)-u_{\hom}(t,x)-v_{1,\eps}(t,x)-v_{2,\eps}(t,x))g(x)|dx\}\ll \sqrt{\eps}.
\]
It still  remains to analyze 
\[
\frac{1}{\sqrt{\eps}}\int_\R (v_{1,\eps}(t,x)+v_{2,\eps}(t,x))g(x)dx.
\]
The previous argument goes through except for some modification in the proof of \eqref{eq:qiv1}, \eqref{eq:conr1} and \eqref{eq:conm1}. Taking \eqref{eq:qiv1} for example,  it suffices to show that
\begin{equation}
\begin{aligned}
&\int_\R g(x) \E_B\{ f'(x+M_t^\eps)g_k(\eps X_{t/\eps^2})1_{|\eps X_{t/\eps^2}|<\delta^{-1}}\}dx\\
\to&\int_\R g(x) \E_W\{ f'(x+\bar{\sigma}W_t) g_k(x+\bar{\sigma}W_t)1_{|x+\bar{\sigma}W_t|<\delta^{-1}}\}dx
\label{eq:qiv1n}
\end{aligned}
\end{equation}
in probability as $\eps\to 0$. We emphasize that $\eps X_{t/\eps^2}=x+R_t^\eps+M_t^\eps$ and the environmental process $\omega_s=\tau_{X_s}\omega$ starts from $\omega_0=\tau_{x/\eps}\omega$. To prove \eqref{eq:qiv1n}, we consider the $L^1(\Omega)$ error
\begin{equation}
\int_\R |g(x)| \E\{|I_1(x)-I_2(x)|\}dx,
\label{eq:qiv1nn}
\end{equation}
with 
\begin{eqnarray*}
I_1(x)&=& \E_B\{ f'(x+M_t^\eps)g_k(\eps X_{t/\eps^2})1_{|\eps X_{t/\eps^2}|<\delta^{-1}}\},\\
I_2(x)&=&\E_W\{ f'(x+\bar{\sigma}W_t) g_k(x+\bar{\sigma}W_t)1_{|x+\bar{\sigma}W_t|<\delta^{-1}}\}.
\end{eqnarray*}
For every fixed $x\in \R$, we can again shift the environment by $\tau_{-x/\eps}$ without affecting the value of $\E\{|I_1(x)-I_2(x)|\}$, and after the change of the environment, by the quenched invariance principle, $\E\{|I_1(x)-I_2(x)|\}\to 0$  as $\eps \to0$. Since it is uniformly bounded, we only need to apply dominated convergence theorem to derive $\eqref{eq:qiv1nn}\to 0$, which further implies \eqref{eq:qiv1n}.

To summarize, we have shown that 
\[
v_\eps(t,x)=\frac{u_\eps(t,x)-u_{\hom}(t,x)}{\sqrt{\eps}}\Rightarrow v(t,x)
\]
in the sense of Theorem~\ref{t:mainth}, with $v(t,x)$ given by \eqref{eq:conv1v2n}.


\subsection{PDE representations and a comparison with high dimensions}

Now we discuss the individual terms coming from $v(t,x)$. By \eqref{eq:conv1v2n}, let us write 
\[
v(t,x)=-\hat{R}(0)^\frac12\bar{a}v_1(t,x)+\hat{R}(0)^\frac12\bar{a}v_2(t,x)+\frac12\hat{R}(0)^\frac12\bar{a}^2v_3(t,x),
\]
 with
\begin{eqnarray*}
v_1(t,x)&=&\E_W\{f'(x+\bar{\sigma}W_t)\W(x+\sigma W_t)\},\\
v_2(t,x)&=&\E_W\{f'(x+\bar{\sigma}W_t)\}\W(x),\\
v_3(t,x)&=&\E_W\{f''(x+\bar{\sigma}W_t)\int_0^t \dot{\W}(x+\bar{\sigma} W_s)ds\}.
\end{eqnarray*}
It is easy to see that $v_1(t,x)$ is the solution to the heat equation with a random initial condition, i.e., 
\begin{equation}
\partial_t v_1=\frac12\bar{a}\partial_{x}^2v_1, \mbox{ with } v_1(0,x)=f'(x)\W(x),
\label{eq:v1}
\end{equation}
and
 \begin{equation}
 v_2(t,x)=\partial_x u_{\hom}(t,x)\W(x).
 \label{eq:v2}
 \end{equation}
 For $v_3$, Lemma~\ref{lem:spde} shows that it solves the SPDE with additive spatial white noise and zero initial condition:
 \begin{equation}
 \partial_t v_3=\frac12\bar{a}\partial_{x}^2v_3+\partial_{x}^2u_{\hom}(t,x)\dot{\W}(x), \mbox{ with } v_3(0,x)=0.
 \label{eq:v3}
 \end{equation}
%

We point out that $v_2$ corresponds to the first order fluctuation obtained by a formal two scale expansion. If we write
\[
u_\eps(t,x)=u_{\hom}(t,x)+\eps u_1(t,x,\frac{x}{\eps})+\ldots,
\]
then $u_1(t,x,x/\eps)=\partial_x u_{\hom}(t,x)\tilde{\phi}(x/\eps)$ with $\tilde{\phi}$ solving the corrector equation \eqref{eq:coreq}. Since $\sqrt{\eps}\tilde{\phi}(x/\eps)$ scales to a two-sided Brownian motion (which is not stationary) when $d=1$:
\[
\sqrt{\eps}\tilde{\phi}(\frac{x}{\eps}) \Rightarrow \hat{R}(0)^\frac12\bar{a}\W(x),
\]
we have
\[
\eps \partial_x u_{\hom}(t,x)\tilde{\phi}(\frac{x}{\eps})\sim \sqrt{\eps}\partial_x u_{\hom}(t,x)\hat{R}(0)^\frac12\bar{a}\W(x)=\sqrt{\eps}\hat{R}(0)^\frac12\bar{a}v_2(t,x).
\]

The first order fluctuation given by $v(t,x)$ is very different in high dimensions $d\geq 3$. Recall that $u_\eps-u_{\hom}\approx v_{1,\eps}+v_{2,\eps}$ with $v_{1,\eps}(t,x)=\E_B\{f'(x+M_t^\eps)R_t^\eps\}$ and 
\[
R_t^\eps=-\eps \tilde{\phi}(X_{t/\eps^2})+\eps \tilde{\phi}(X_0).
\]
When $d\geq 3$, we have a stationary zero-mean corrector \cite[Corollary 1]{gloria2014quantitative}, so 
\[
\E_B\{ f'(x+M_t^\eps) \eps \tilde{\phi}(X_0)\}\sim \eps \partial_x u_{\hom}(t,x)\tilde{\phi}(\frac{x}{\eps}),
\]
and 
\[
|\E_B\{ f'(x+M_t^\eps) \eps \tilde{\phi}(X_{t/\eps^2})\}|\ll \eps
\]
due to the fact that $\E\{\tilde{\phi}\}=0$ and the mixing induced by $X_{t/\eps^2}$ when $\eps$ is small. For $v_{2,\eps}(t,x)=\frac12\E_B\{f''(x+M_t^\eps)(\langle M^\eps\rangle_t-\bar{a}t)\}$, it turns out  
\[
\eps^{-1}(\langle M^\eps\rangle_t-\bar{a}t) =\bar{a}^2\eps\int_0^{t/\eps^2}\tilde{V}(X_s)ds
\]
is an approximating martingale when $d\geq 3$, and is asymptotically independent of $M_t^\eps$. This implies $|v_{2,\eps}(t,x)|\ll \eps$. Combining these results, it was shown for fixed $(t,x)$ that 
\[
u_\eps(t,x)=u_{\hom}(t,x)+\eps \nabla u_{\hom}(t,x)\cdot \tilde{\phi}(\frac{x}{\eps})+o(\eps),
\]
with $o(\eps)/\eps\to 0$ in $L^1(\Omega)$. Therefore, the pointwise first order fluctuation when $d\geq 3$ is given by $\eps \nabla u_{\hom}(t,x)\cdot \tilde{\phi}(x/\eps)$, which only corresponds to $v_2(t,x)=\partial_x u_{\hom}(t,x)\W(x)$ when $d=1$.

The following simple example illustrates the differences. Let $u_\eps(0,x)=\xi\cdot x$ for some fixed direction $\xi\in \R^d$, so 
\[
u_\eps(t,x)=\E_B\{\xi\cdot \eps X_{t/\eps^2}\}=\xi\cdot x-\eps \E_B\{\xi\cdot \tilde{\phi}(X_{t/\eps^2})\}+\eps \xi\cdot \tilde{\phi}(X_0)
\]
When a stationary corrector exists in $d\geq 3$, $\E_B\{\xi\cdot \tilde{\phi}(X_{t/\eps^2})\}\to 0$ in $L^2(\Omega)$ as $\eps\to 0$, and this is not the case by our proof when $d=1$.

To summarize, the underlying diffusion process is so recurrent when $d=1$ that the sample path is recorded in the asymptotic limit as $\eps\to 0$, and all three terms in $v_{1,\eps}+v_{2,\eps}$ contribute to the first order fluctuation. When $d\geq 3$, we have sufficient mixing effects coming from the diffusion process, which leads to a different asymptotic behavior.

\subsection{An SPDE representation of $v(t,x)$}

At this point, our proof shows the limit $v(t,x)$ is a superposition of three Gaussian processes $v_1,v_2,v_3$, and it turns out that they can be combined to form the solution to the SPDE given by \eqref{eq:limitspde}:
\begin{prop}
$v(t,x)$ solves 
\begin{equation}
\partial_t v=\frac12\bar{a}\partial_x^2 v-\frac12\hat{R}(0)^\frac12\bar{a}^2 \partial_x (\partial_x u_{\hom}\dot{\W}), \mbox{ with } v(0,x)=0.
\label{eq:limitspde}
\end{equation}
\label{p:spde}
\end{prop}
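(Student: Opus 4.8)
The plan is to read \eqref{eq:limitspde} in the weak (equivalently mild) sense --- i.e.\ to test against $\varphi\in\C_c^\infty(\R)$ and move all spatial derivatives onto smooth objects by integration by parts, so that the only stochastic object appearing is the Wiener integral against $\W$ --- and then verify the identity term by term using the decomposition
\[
v=-\hat{R}(0)^{\frac12}\bar{a}\,v_1+\hat{R}(0)^{\frac12}\bar{a}\,v_2+\tfrac12\hat{R}(0)^{\frac12}\bar{a}^2\,v_3
\]
together with the equations \eqref{eq:v1}, \eqref{eq:v2} and \eqref{eq:v3} already established. First I would dispose of the initial condition: by \eqref{eq:v1} one has $v_1(0,x)=f'(x)\W(x)$, and since $u_{\hom}(0,x)=f(x)$, formula \eqref{eq:v2} gives $v_2(0,x)=\partial_x u_{\hom}(0,x)\W(x)=f'(x)\W(x)$ as well, while $v_3(0,x)=0$; hence the $v_1$ and $v_2$ contributions to $v(0,\cdot)$ cancel and $v(0,x)=0$.

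For the equation itself, the contribution of $v_1$ is trivial since it solves the homogeneous heat equation \eqref{eq:v1}. The heart of the argument is the inhomogeneous term produced by $v_2=\partial_x u_{\hom}\,\W$. Differentiating in $t$ and using $\partial_t u_{\hom}=\tfrac12\bar{a}\partial_x^2 u_{\hom}$ gives $\partial_t v_2=\tfrac12\bar{a}\,(\partial_x^3 u_{\hom})\,\W$, while the distributional Leibniz rule gives $\partial_x^2 v_2=(\partial_x^3 u_{\hom})\W+2(\partial_x^2 u_{\hom})\dot\W+(\partial_x u_{\hom})\partial_x\dot\W$; subtracting,
\[
\partial_t v_2-\tfrac12\bar{a}\partial_x^2 v_2=-\tfrac12\bar{a}\,\partial_x\bigl(\partial_x u_{\hom}\,\dot\W\bigr)-\tfrac12\bar{a}\,(\partial_x^2 u_{\hom})\,\dot\W .
\]
Inserting this, the homogeneous equation for $v_1$, and \eqref{eq:v3} into the decomposition, the two terms proportional to $(\partial_x^2 u_{\hom})\dot\W$ --- one coming from $v_2$, one from the forcing of $v_3$ --- cancel after taking into account the coefficient $\tfrac12\hat{R}(0)^{\frac12}\bar{a}^2$, and what remains is exactly $-\tfrac12\hat{R}(0)^{\frac12}\bar{a}^2\partial_x(\partial_x u_{\hom}\dot\W)$, which is \eqref{eq:limitspde}.

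The step requiring care --- and the main obstacle --- is making the Leibniz computation above rigorous, since $\W$ is only locally H\"older of exponent $<1/2$ and $\dot\W$, $\partial_x\dot\W$ are genuine distributions, so the ``products'' $(\partial_x^2 u_{\hom})\dot\W$ and $(\partial_x u_{\hom})\partial_x\dot\W$ only acquire meaning after pairing with a test function. I would handle this by working throughout with $\langle v(t),\varphi\rangle$ for $\varphi\in\C_c^\infty(\R)$: since $f\in\C_c^\infty(\R)$, $u_{\hom}(t,\cdot)$ and all its spatial derivatives lie in the Schwartz class for every $t\ge 0$, so $\langle v_2(t),\varphi\rangle=\int_\R\partial_x u_{\hom}(t,x)\varphi(x)\,\W(dx)$ is an honest Wiener integral with a smooth, rapidly decaying integrand, and each integration by parts in $x$ merely transfers a derivative between $u_{\hom}$ and $\varphi$. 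In this weak form all the manipulations above are legitimate --- one checks the needed space--time integrability of $\partial_x u_{\hom}$ and $\partial_x^2 u_{\hom}$ using the Gaussian bounds for the heat kernel of $\tfrac12\bar a\partial_x^2$, exactly as in Lemma~\ref{lem:spde} --- and one obtains the weak formulation of \eqref{eq:limitspde}. Alternatively one can bypass the formal Leibniz rule by starting from the local-time/It\^o identity \eqref{eq:ito} and differentiating the resulting representations of $v_1$, $v_2$, $v_3$ directly, but the term-by-term weak verification is the most transparent route.
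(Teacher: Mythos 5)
Your verification is correct, and it takes a genuinely different route from the paper. You fix the combination $v=-\hat{R}(0)^{1/2}\bar{a}\,v_1+\hat{R}(0)^{1/2}\bar{a}\,v_2+\tfrac12\hat{R}(0)^{1/2}\bar{a}^2 v_3$ and check the weak formulation of \eqref{eq:limitspde} term by term, using \eqref{eq:v1}, \eqref{eq:v2}, \eqref{eq:v3} together with the smooth-function-times-distribution Leibniz rule for $v_2=\partial_x u_{\hom}\W$; your algebra is right (the two $(\partial_x^2 u_{\hom})\dot\W$ terms do cancel against the forcing of $v_3$, and $v_1(0,\cdot)=v_2(0,\cdot)=f'\W$ kills the initial datum), and since $u_{\hom}(t,\cdot)$ is Schwartz and $\W$ has a.s.\ sublinear growth, pairing with $\varphi\in\C_c^\infty$ makes every product an honest Wiener integral, so the distributional manipulations are legitimate. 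The paper argues in the opposite direction: it takes the mild (Duhamel) solution of \eqref{eq:limitspde} written as a Wiener integral $\int_\R G(y)\W(dy)$, mollifies the noise, uses the probabilistic representation (as in Lemma~\ref{lem:spde}), It\^o's formula, and the Malliavin/Skorohod duality to identify the limit of the mollified solution with exactly the above combination of $v_1,v_2,v_3$. The trade-off: your route is more elementary (no Malliavin calculus) and transparent at the level of the equations, but it consumes Lemma~\ref{lem:spde} as a black box for $v_3$ (which is where the mollification and local-time estimates actually live) and implicitly invokes the standard equivalence of weak and mild solutions for this linear additive-noise equation, so that ``solves'' matches the paper's Duhamel notion; the paper's route works directly with the mild solution and in effect re-derives the duality identity linking the $v_1-v_2$ part to $v_3$ (the rearrangement between remainder and martingale errors that the discussion after the proposition emphasizes), rather than having it emerge from cancellation. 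If you keep your version, state explicitly the weak formulation you verify and add one line on weak--mild equivalence (or uniqueness in the relevant class) to conclude that your $v$ is the solution referred to in \eqref{eq:limitspde}.
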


We first give a heuristic derivation of \eqref{eq:limitspde}. Recall that 
\[
v_3(t,x)=\E_W\{f''(x+\bar{\sigma}W_t)\int_0^t \dot{\W}(x+\bar{\sigma}W_s)ds\},
\]
and if we treat $\dot{\W}$ as a function, an application of duality relation in Malliavin calculus shows that
\[
v_3(t,x)=\frac{1}{\bar{\sigma}}\E_W\{f'(x+\bar{\sigma}W_t)\int_0^t \dot{\W}(x+\bar{\sigma}W_s)dW_s\}.
\]
Furthermore, since $v_1(t,x)-v_2(t,x)=\E_W\{f'(x+\bar{\sigma}W_t)(\W(x+\bar{\sigma}W_t)-\W(x))$, a formal application of It\^o's formula gives that
\[
v_1(t,x)-v_2(t,x)-\bar{\sigma}^2v_3(t,x)=\frac12\bar{\sigma}^2\E_W\{f'(x+\bar{\sigma}W_t)\int_0^t \ddot{\W}(x+\bar{\sigma}W_s)ds\},
\]
so by recalling that $\bar{a}=\bar{\sigma}^2$, we obtain
\[
\begin{aligned}
v(t,x)=&-\hat{R}(0)^\frac12\bar{a}(v_1(t,x)-v_2(t,x)-\frac12\bar{a}v_3(t,x))\\
=&-\frac12\hat{R}(0)^\frac12\bar{a}^2 (v_3+v_4),
\end{aligned}
\]
with 
\[
v_4(t,x)=\E_W\{f'(x+\bar{\sigma}W_t)\int_0^t \ddot{\W}(x+\bar{\sigma}W_s)ds\}.
\]
Since $v_3$ solves $\partial_tv_3=\frac12\bar{a}\partial_x^2v_3+\partial_x^2u_{\hom}\dot{\W}$ with zero initial data, the same argument should predict $v_4$ solves
\[
\partial_t v_4=\frac12\bar{a}\partial_x^2v_4+\partial_xu_{\hom}\ddot{\W}, \mbox{ with } v_4(0,x)=0, 
\]
hence $v$ should satisfy
\[
\partial_t v=\frac12\bar{a}\partial_x^2v-\frac12\hat{R}(0)^\frac12\bar{a}^2(\partial_{x}^2u_{\hom}\dot{\W}+\partial_xu_{\hom}\ddot{\W}), \mbox{ with } v(0,x)=0,
\]
which leads to \eqref{eq:limitspde} if we write $\partial_x(\partial_x u_{\hom}\dot{\W})=\partial_{x}^2u_{\hom}\dot{\W}+\partial_xu_{\hom}\ddot{\W}$. 

The following is a rigorous proof of the above argument by introducing some mollification.

\begin{proof}[Proof of Proposition~\ref{p:spde}]
For fixed $(t,x)$, the solution to \eqref{eq:limitspde} can be written as
\[
\begin{aligned}
v(t,x)=&-\frac12\hat{R}(0)^\frac12\bar{a}^2\int_\R\left(\int_0^t  q_{\bar{a}(t-s)}'(x-y)\partial_yu_{\hom}(s,y)ds\right)\W(dy)\\
:=&\int_\R G(y)\W(dy).
\end{aligned}
\]
It is straightforward to check that $G\in L^2(\R)$ (since $(t,x)$ is fixed, we have omitted the dependence of $G$ on it). Define
\[
\W_\eps(y)=\int_\R h_\eps(y-z)\W(dz)
\]
as a smooth mollification of $\dot{\W}$. Here $h_\eps(x)=\eps^{-1}h(x/\eps)$ with $h:\R\to\R_+$ smooth, even, compactly supported and satisfying $\int_\R h(x)dx=1$. We can define 
\[
v^\eps(t,x)=\int_\R G(y)\W_\eps(y)dy=\int_\R G\star h_\eps(z)\W(dz),
\]
and since $G\in L^2(\R)$, $G\star h_\eps\to G$ in $L^2(\R)$, so $v^\eps \to v$ in $L^2(\Omega)$ as $\eps\to0$. Now we show the $L^2(\Omega)$ limit of $v_\eps$ can also be written as a linear combination of $v_1,v_2,v_3$.

First we rewrite $v_\eps$ as
\[
\begin{aligned}
v_\eps(t,x)=&-\frac12\hat{R}(0)^\frac12\bar{a}^2\int_0^t \int_\R q_{\bar{a}(t-s)}(x-y)\partial_y(\partial_yu_{\hom}(s,y)\W_\eps(y))dyds\\
:=&(i)+(ii),
\end{aligned}
\]
with 
\begin{eqnarray*}
(i)&=&-\frac12\hat{R}(0)^\frac12\bar{a}^2\int_0^t \int_\R q_{\bar{a}(t-s)}(x-y)\partial_y^2u_{\hom}(s,y)\W_\eps(y)dyds,\\
(ii)&=&-\frac12\hat{R}(0)^\frac12\bar{a}^2\int_0^t \int_\R q_{\bar{a}(t-s)}(x-y)\partial_yu_{\hom}(s,y)\W_\eps'(y)dyds.
\end{eqnarray*}
It is clear that 
\[
(i)\to -\frac12\hat{R}(0)^\frac12\bar{a}^2v_3(t,x)
\]
in $L^2(\Omega)$. For $(ii)$, by the same proof as in Lemma~\ref{lem:spde} we have
\[
(ii)=-\frac12\hat{R}(0)^\frac12\bar{a}^2\E_W\{f'(x+\bar{\sigma}W_t)\int_0^t \W_\eps'(x+\bar{\sigma}W_s)ds\},
\]
and an application of It\^o's formula gives 
\[
\int_0^t \W_\eps'(x+\bar{\sigma}W_s)ds=\frac{2}{\bar{a}}\int_x^{x+\bar{\sigma}W_t}\W_\eps(y)dy-\frac{2}{\bar{a}}\int_0^t \W_\eps(x+\bar{\sigma}W_s)d\bar{\sigma}W_s.
\]

For the second part, we apply the duality relation in Malliavin calculus and the fact that the It\^o's integral is a particular case of the Skorohod integral \cite[Proposition 1.3.11]{nualart2006malliavin} to obtain
\[
\begin{aligned}
\E_W\{f'(x+\bar{\sigma}W_t)\int_0^t \W_\eps(x+\bar{\sigma}W_s)d\bar{\sigma}W_s\}=&\bar{a}\E_W\{f''(x+\bar{\sigma}W_t)\int_0^t \W_\eps(x+\bar{\sigma}W_s)ds\}\\
\to &\bar{a} v_3(t,x)
\end{aligned}
\]
in $L^2(\Omega)$.

For the first part, we write
\[
\int_x^{x+\bar{\sigma}W_t}\W_\eps(y)dy=\int_\R (1_{[x,x+\bar{\sigma}W_t]}\star h_\eps )(y)\W(dy),
\]
so it is clear that 
\[
\begin{aligned}
\E_W\{f'(x+\bar{\sigma}W_t)\int_x^{x+\bar{\sigma}W_t}\W_\eps(y)dy\}\to& \E_W\{f'(x+\bar{\sigma}W_t)\int_\R 1_{[x,x+\bar{\sigma}W_t]}(y)\W(dy)\}\\
=&v_1(t,x)-v_2(t,x)
\end{aligned}
\]
in $L^2(\Omega)$. The proof is complete.
\end{proof}

If we formally write in \eqref{eq:limitspde} that $\partial_x(\partial_xu_{\hom}\dot{\W})=\partial_x^2 u_{\hom}\dot{\W}+\partial_x u_{\hom}\ddot{\W}$, the term $\partial_x^2 u_{\hom}\dot{\W}$ does not come from $v_3$ since we have an opposite sign in \eqref{eq:v3}. If we recall that $v_1,v_2$ comes from the remainder $R_t^\eps$ and $v_3$ comes from the martingale $M_t^\eps$, this indicates that the errors coming from the martingale decomposition need to be rearranged to obtain the correct representation given by \eqref{eq:limitspde}.

\appendix

\section{Technical lemmas}
\label{s:lem}

\begin{lem}
$\E\{|\tilde{\phi}(x)|^2\}\les |x|$ and $\E\{|\tilde{\psi}(x)|^2\}\les |x|^3$.
\label{lem:mmpp}
\end{lem}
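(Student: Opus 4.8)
The plan is to prove the two bounds in turn, deducing the second from the first. For $\tilde\phi$, I would start from the explicit representation $\tilde\phi(x)=\bar a\int_0^x \tilde V(y)\,dy$ from \eqref{eq:defphi} and expand the second moment by Fubini:
\[
\E\{|\tilde\phi(x)|^2\}=\bar a^2\int_0^x\!\!\int_0^x R(y-z)\,dy\,dz,
\]
where $R(u)=\E\{\tilde V(0)\tilde V(u)\}$ is the covariance function of $\tilde V$. Two properties of $R$ do the work. First, $\tilde a\in[\lambda,1]$ forces $\tilde V=\tilde a^{-1}-\bar a^{-1}$ to be bounded, so $R(0)=\var(\tilde V(0))<\infty$ and $|R(u)|\le R(0)$ by Cauchy--Schwarz. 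Second, the finite range of dependence of $\tilde a$ (Assumption (ii)) makes $\tilde V(0)$ and $\tilde V(u)$ independent, hence $R(u)=0$, whenever $|u|\ge 1$. Combining these, for each fixed $y$ the inner integral $\int_0^x R(y-z)\,dz$ is bounded in absolute value by $\int_{-1}^{1}|R(u)|\,du\le 2R(0)$, so the double integral is at most $2R(0)\,|x|$ (the same bound holding for $x<0$ after the obvious sign bookkeeping in the limits of integration). This yields $\E\{|\tilde\phi(x)|^2\}\les|x|$.

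For $\tilde\psi$, I would use the representation $\tilde\psi(x)=-\tfrac{2}{\bar a}\int_0^x \tilde a(y)^{-1}\tilde\phi(y)\,dy$ from \eqref{eq:defpsi1} and apply Minkowski's integral inequality in $L^2(\Omega)$, together with $\tilde a\ge\lambda$:
\[
\|\tilde\psi(x)\|_{L^2(\Omega)}\le \frac{2}{\bar a}\int_0^x \big\|\tilde a(y)^{-1}\tilde\phi(y)\big\|_{L^2(\Omega)}\,dy\le \frac{2}{\lambda\bar a}\int_0^x \|\tilde\phi(y)\|_{L^2(\Omega)}\,dy.
\]
Plugging in the bound $\|\tilde\phi(y)\|_{L^2(\Omega)}\les\sqrt{|y|}$ just established and integrating, $\int_0^x\sqrt{|y|}\,dy\les|x|^{3/2}$, and squaring gives $\E\{|\tilde\psi(x)|^2\}\les|x|^3$.

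Both arguments are self-contained and essentially decoupled from the rest of the paper; neither presents a genuine obstacle. The only point that needs a little care is the finite-range cancellation in the $\tilde\phi$ estimate, i.e.\ recognizing that the truncation $R(u)=0$ for $|u|\ge 1$ is exactly what upgrades the naive $O(|x|^2)$ bound (coming from $|R|\le R(0)$ alone) to the correct $O(|x|)$ growth, and treating $x<0$ symmetrically. Everything else is Fubini, Cauchy--Schwarz, and Minkowski's inequality.
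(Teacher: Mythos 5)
Your proof is correct. The $\tilde\phi$ bound is essentially the paper's argument: expand $\E\{|\tilde\phi(x)|^2\}=\bar a^2\int_0^x\int_0^x R(y-z)\,dy\,dz$ and use integrability of $R$ (you justify this via boundedness of $\tilde V$ plus finite range, which the paper leaves implicit; note the vanishing $R(u)=0$ for $|u|\ge 1$ also uses $\E\{\tilde V\}=0$, which holds by definition of $\bar a$). For $\tilde\psi$, however, you take a genuinely different and more economical route. The paper substitutes $\tilde\phi(y)=\bar a\int_0^y\tilde V(z)\,dz$ into \eqref{eq:defpsi1}, expands $\E\{|\tilde\psi(x)|^2\}$ as a fourfold integral of joint moments of $(\tilde V(y_1)+\bar a^{-1})(\tilde V(y_2)+\bar a^{-1})\tilde V(z_1)\tilde V(z_2)$, and then controls the second, third and fourth moments via the separate moment-estimates Lemma~\ref{lem:mmes}, the finite-range decay $\rho$ being what brings the naive $|x|^4$ down to $|x|^3$. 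You instead bound $\tilde a(y)^{-1}\le\lambda^{-1}$ pointwise and apply Minkowski's integral inequality in $L^2(\Omega)$, which reduces the $\tilde\psi$ estimate entirely to the already-established bound $\|\tilde\phi(y)\|_{L^2(\Omega)}\les|y|^{1/2}$ and gives $\|\tilde\psi(x)\|_{L^2(\Omega)}\les|x|^{3/2}$ directly; the decorrelation is used only once, inside the $\tilde\phi$ bound. Your argument needs only second-moment (covariance) information and makes the higher-moment machinery of Lemma~\ref{lem:mmes} unnecessary for this lemma, at the cost of nothing in the conclusion — the exponent $|x|^3$ is the same. The paper's heavier expansion would only pay off if one needed joint-moment bounds elsewhere, which here it does not.
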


\begin{proof}
Since $\tilde{\phi}(x)=\bar{a}\int_0^x \tilde{V}(y)dy$ and $R(x)$ is the integrable covariance function of $\tilde{V}$, we have
\[
\E\{|\tilde{\phi}(x)|^2\}\les \int_0^x\int_0^x R(y-z)dydz \les |x|.
\]
For $\tilde{\psi}(x)$, by \eqref{eq:defpsi1} we have
\[
\tilde{\psi}(x)=-\frac{2}{\bar{a}}\int_0^x \tilde{\phi}(y)(\tilde{V}(y)+\bar{a}^{-1})dy=-2\int_0^x(\tilde{V}(y)+\bar{a}^{-1})\int_0^y \tilde{V}(z)dzdy
\]
so
\[
\begin{aligned}
&\E\{|\tilde{\psi}(x)|^2\}\\
\les &\int_0^x\int_0^x\int_0^{y_1}\int_0^{y_2}|\E\{(\tilde{V}(y_1)+\bar{a}^{-1})(\tilde{V}(y_2)+\bar{a}^{-1})\tilde{V}(z_1)\tilde{V}(z_2)\}|dz_1dz_2dy_1dy_2.
\end{aligned}
\]
In the above expression, we need to control the second, third and fourth moments of $\tilde{V}$, which is a mean-zero stationary random field of finite range dependence. For the term with the second moment, we have
\[
\int_0^x\int_0^x \int_0^{y_1}\int_0^{y_2}|R(z_1-z_2)|dz_1dz_2dy_1dy_2\les |x|^3.
\]
The other cases are discussed in the same way by applying Lemma~\ref{lem:mmes}.
\end{proof}

\begin{lem}[Moment estimates]
For any $x_i\in \R, i=1,2,3,4$, we have
\begin{equation}
|\E\{\prod_{i=1}^3 \tilde{V}(x_i)\}|\leq \rho(|x_1-x_2|)+\rho(|x_1-x_3|)+\rho(|x_2-x_3|)
\label{eq:3mm}
\end{equation}
and 
\begin{equation}
\begin{aligned}
|\E\{\prod_{i=1}^4 \tilde{V}(x_i)\}|\leq &\rho(|x_1-x_2|)\rho(|x_3-x_4|)+\rho(|x_1-x_3|)\rho(|x_2-x_4|)\\
&+\rho(|x_1-x_4|)\rho(|x_2-x_3|)
\end{aligned}
\label{eq:4mm}
\end{equation}
for some $\rho:\R_+\to\R_+$ satisfying $\rho(r)\les 1\wedge r^{-p}$ for any $p>0$.
\label{lem:mmes}
\end{lem}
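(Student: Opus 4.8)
\textbf{Proof proposal for Lemma~\ref{lem:mmes}.}

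The plan is to exploit the finite range of dependence of $\tilde V$ together with its boundedness. Recall $\tilde V(x)=\tilde a(x)^{-1}-\bar a^{-1}$, so $|\tilde V(x)|\le \lambda^{-1}+\bar a^{-1}=:C_0$ uniformly in $x$ and $\omega$, and $\tilde V(x_i)$ and $\tilde V(x_j)$ are independent whenever $|x_i-x_j|\ge 1$. Since $\E\{\tilde V\}=0$, independence of any point from the others kills the whole expectation; the point is to quantify the near-independent regime by inserting a mixing function. First I would define $\rho(r)$ to be a cutoff-type bound for the decay of correlations: for the finite-range case one may simply take $\rho(r)=C\,(1\wedge r^{-p})$ for each fixed $p$ (with $C=C(p)$), since $\rho$ only needs to dominate a quantity that is literally zero once $r\ge 1$; the flexibility in $p$ is what the downstream applications (Lemma~\ref{lem:mmpp}) need. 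Under a genuine $\phi$-mixing hypothesis, as in Remark after the assumptions, $\rho$ would instead be the mixing rate, and the same argument runs with ``zero'' replaced by ``$\les\rho$''.

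For the three-point bound \eqref{eq:3mm}, order the three points and let $d_{\min}$ be the smallest pairwise gap, realized (say) by $x_i,x_j$; then the third point $x_k$ is at distance $\ge d_{\min}$ from \emph{both}. Split into cases. If the third point is at distance $\ge 1$ from the pair $\{x_i,x_j\}$, then $\tilde V(x_k)$ is independent of $\tilde V(x_i)\tilde V(x_j)$, so by $\E\{\tilde V(x_k)\}=0$ the full expectation vanishes; since that distance is $\ge d_{\min}\ge$ (in particular) each of the two gaps not equal to $d_{\min}$, and $\rho\ge 0$, the bound \eqref{eq:3mm} holds trivially. Otherwise all three points lie within a bounded cluster, the expectation is bounded by $C_0^3$, and since then the largest pairwise gap is $\le 2$ we have $1\wedge r^{-p}\gtrsim 2^{-p}$ for that $r$, so $C_0^3\le C\rho(\text{largest gap})$ and a fortiori $\le$ the sum on the right of \eqref{eq:3mm}. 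Combining the two cases gives \eqref{eq:3mm}.

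For the four-point bound \eqref{eq:4mm} the structure is the same but with a pairing: I would split $\{x_1,x_2,x_3,x_4\}$ into its connected components under the relation ``distance $<1$''. If there are two components of size two, say $\{x_a,x_b\}$ and $\{x_c,x_d\}$, then $\tilde V(x_a)\tilde V(x_b)$ is independent of $\tilde V(x_c)\tilde V(x_d)$, and since each factor has mean... not necessarily zero — here one uses instead that $\E\{\tilde V(x_a)\tilde V(x_b)\}$ is itself bounded by $C\rho(|x_a-x_b|)$ (the two-point case, which is the $k=2$ instance and follows from boundedness plus $\E\{\tilde V\}=0$ exactly as above), so the product expectation factors as $\E\{\tilde V(x_a)\tilde V(x_b)\}\,\E\{\tilde V(x_c)\tilde V(x_d)\}$, bounded by $C^2\rho(|x_a-x_b|)\rho(|x_c-x_d|)$, which is one of the three terms on the right of \eqref{eq:4mm}. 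If some point is isolated (distance $\ge 1$ from the other three), the expectation vanishes and there is nothing to prove. In the remaining case all four points lie in one bounded cluster of diameter $\le 3$; then the expectation is $\le C_0^4$, and since in that regime \emph{every} pairwise distance is $\le 3$, each product $\rho(\cdot)\rho(\cdot)$ on the right is $\gtrsim (3^{-p})^2$, so $C_0^4$ is absorbed. This proves \eqref{eq:4mm}.

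The only genuinely delicate point — and the one I would be most careful with — is the bookkeeping in the ``mixed'' regimes for the four-point estimate: e.g.\ a cluster of three plus one far point, or a cluster of two plus two far-apart singletons, where one must check that the chosen dominating term on the right-hand side of \eqref{eq:4mm} (with its specific pairing) is actually $\ge$ the bound one gets, rather than some other pairing. The safe way is to always bound by $\sum$ over \emph{all} three pairings and note each term is nonnegative, so it suffices that \emph{one} of them dominates; picking the pairing that matches the nearest-neighbor structure of the configuration always works. Under the $\phi$-mixing hypothesis the argument is identical in shape, with ``independent, hence the expectation vanishes'' replaced throughout by ``$\phi$-mixing, hence the expectation is $\les\phi(d)$'', and one takes $\rho=C\phi^{\alpha}$ for a suitable exponent to get the required decay; I would remark this but carry out the finite-range case in detail.
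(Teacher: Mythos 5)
Your argument is correct, and it is worth noting that it is more self-contained than the paper's own proof: the paper disposes of the fourth-moment bound \eqref{eq:4mm} by citing \cite[Lemma 3.1]{B-MMS-08}, and proves \eqref{eq:3mm} in one line by observing that the third moment is bounded by $\rho(\min_{i\neq j}|x_i-x_j|)$ for a compactly supported $\rho$ (finite range plus $\E\{\tilde V\}=0$ force the expectation to vanish once all pairwise gaps exceed the dependence range), and then $\rho(\min)\le\sum\rho$. Your connected-component decomposition under the relation ``distance $<1$'' reproves both bounds from scratch: the case analysis (a singleton component kills the expectation by independence and mean zero; two doubleton components give the factorization $\E\{\tilde V(x_a)\tilde V(x_b)\}\,\E\{\tilde V(x_c)\tilde V(x_d)\}$, matching one pairing on the right of \eqref{eq:4mm}; a single component has diameter at most $3$, so boundedness absorbs everything) is exhaustive and each step uses the finite-range hypothesis exactly as stated for sets $A,B$, so the joint independence you need (one point versus a pair, or pair versus pair) is legitimate. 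What the paper's route buys is brevity; what yours buys is that the reader need not unwind the external lemma, and the same skeleton visibly generalizes to the $\phi$-mixing setting mentioned in the paper's remark. One small repair: as written you fix $p$ and take $\rho(r)=C(1\wedge r^{-p})$, which does not literally satisfy the requirement ``$\rho(r)\lesssim 1\wedge r^{-p}$ for \emph{any} $p>0$'' in the statement. Since in every non-vanishing case of your analysis all relevant distances are at most $3$, you can simply take $\rho(r)=C\,\1_{[0,3]}(r)$ (or $Ce^{-r}$) with $C$ depending only on $\lambda$; this compactly supported choice makes all your absorption steps go through verbatim and satisfies the stated decay for every $p$.
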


\begin{proof}
Since $\tilde{V}$ is bounded, mean zero and of finite range dependence, \eqref{eq:4mm} comes from \cite[Lemma 3.1]{B-MMS-08}. For \eqref{eq:3mm}, it is clear that there exists a compactly supported $\rho:\R_+\to\R_+$ such that
\[
\begin{aligned}
|\E\{\prod_{i=1}^3 \tilde{V}(x_i)\}|\leq& \rho(\min\{ |x_1-x_2|,|x_1-x_3|,|x_2-x_3|\})\\
\leq &\rho(|x_1-x_2|)+\rho(|x_1-x_3|)+\rho(|x_2-x_3|).
\end{aligned}
\]
The proof is complete.
\end{proof}

\begin{lem}[Estimates on local time]
Let $L_t^x(y)$ be the local time of a standard Brownian motion $W_t$ starting from $x$ up to $t$, then for any $p\geq 1$, 
\[
\E\{|L_t^x(y)|^p\}\les t^{\frac{p}{2}}\int_{|y-x|}^\infty q_t(z)dz.
\]
\label{lem:localtime}
\end{lem}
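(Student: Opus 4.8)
The plan is to exploit the classical density of Brownian local time together with the reflection/occupation-time formula. Recall that for a standard Brownian motion $W_t$ started at $x$, the joint density of $(L_t^x(y), W_t)$ can be written down explicitly via the Ray--Knight description or, more elementarily, via the first hitting time $\tau_y = \inf\{s : W_s = y\}$: one has $L_t^x(y) = 0$ on the event $\{\tau_y > t\}$, and conditionally on $\{\tau_y \le t\}$, by the strong Markov property at $\tau_y$, the local time $L_t^x(y)$ has the same law as $L_{t-\tau_y}^y(y)$, which in turn (again by a standard identity, e.g.\ $L_s^0(0) \overset{d}{=} |W_s|$ for a BM started at $0$) is distributed like $|W_{t-\tau_y}|$ where this last $W$ is independent of $\tau_y$. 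So the first step is to record the representation $\E\{|L_t^x(y)|^p\} = \E\{|W_{t-\tau_y}|^p \1_{\tau_y \le t}\}$ where $\tau_y$ is the hitting time of $y$ by a BM from $x$ and the final $W$ is an independent BM from $0$.

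Second, I would bound $|W_{t-\tau_y}|^p$ crudely. Conditioning on $\tau_y = s$, we have $\E\{|W_{t-s}|^p\} = c_p (t-s)^{p/2} \le c_p t^{p/2}$ for $s \le t$, so
\[
\E\{|L_t^x(y)|^p\} \le c_p\, t^{p/2}\, \P\{\tau_y \le t\}.
\]
Third, I would identify $\P\{\tau_y \le t\}$: by the reflection principle, $\P\{\tau_y \le t\} = \P\{\sup_{s\le t}|W_s - x| \ge |y-x|\}$ restricted appropriately, and more precisely $\P\{\tau_y \le t\} = 2\P\{W_t - x \ge |y-x|\} = 2\int_{|y-x|}^\infty q_t(z)\,dz$, since a BM from $x$ hits $y$ before time $t$ iff (by reflection) its value at time $t$ exceeds $|y-x|$ in the appropriate direction, doubled. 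This gives exactly the claimed bound $\E\{|L_t^x(y)|^p\} \les t^{p/2}\int_{|y-x|}^\infty q_t(z)\,dz$, absorbing the factor $2c_p$ into $\les$.

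The only mild subtlety — and the step I would be most careful about — is the identity $L_t^x(y) \overset{d}{=} |W_{t-\tau_y}|\,\1_{\tau_y\le t}$: it requires knowing that $L^y(y)$ under a BM from $y$ has the law of $|W_\cdot|$ (Lévy's theorem, $L^0_s(0) \overset{d}{=} M_s \overset{d}{=} |W_s|$ with $M$ the running max), plus an application of the strong Markov property at $\tau_y$ and the additivity $L_t^x(y) = L_t^x(y) - L_{\tau_y}^x(y)$ on $\{\tau_y \le t\}$ since $L^x(y)$ does not grow before $y$ is reached. Alternatively, if one wishes to avoid Lévy's theorem, one can instead use the explicit formula for the joint density of $(W_t, L_t^0(0))$ from Karatzas--Shreve and integrate directly; this is purely computational. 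Either route delivers the bound with no extra hypotheses.
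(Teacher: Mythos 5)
Your proposal is correct and follows essentially the same route as the paper: reduce to the hitting time $\tau_{y-x}$ by the strong Markov property, use L\'evy's identity for the local time at the starting point (the paper writes it as the running maximum $M_{t-s}$, you as $|W_{t-s}|$, which have the same law), bound the $p$-th moment by $t^{p/2}$, and conclude with the reflection principle $\P(\tau_{y-x}\le t)=2\int_{|y-x|}^\infty q_t(z)\,dz$. No substantive difference from the paper's argument.
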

\begin{proof}
First, $L_t^x(y)$ has the same distribution as $L_t^0(y-x)$. By the strong Markov property of Brownian motion and distribution property of $L_t^0(0)$, we further have 
\[
L_t^0(y-x)\sim L_{t-\tau_{y-x}}^0(0)1_{\tau_{y-x}\leq t}\sim M_{t-\tau_{y-x}}1_{\tau_{y-x}\leq t},
\] 
where $\tau_{y-x}$ is the hitting time of another independent Brownian motion starting at zero and reaching at $y-x$, and $M_t$ is the maximum of $W_t$ during $[0,t]$. Thus we have
\[
\begin{aligned}
\E\{|L_t^x(y)|^p\}=\int_0^t \E\{ |M_{t-s}|^p\} p^{\tau_{y-x}}(s)ds\les &t^{\frac{p}{2}}\int_0^t p^{\tau_{y-x}}(s)ds\\
=&t^{\frac{p}{2}} \P(\tau_{y-x}\leq t),
\end{aligned}
\]
with $p^{\tau_{y-x}}$ the density of $\tau_{y-x}$. The reflection principle tells that
\[
\P (\tau_{y-x}\leq t)=2\int_{|y-x|}^\infty q_t(z)dz.
\]
The proof is complete.
\end{proof}

\begin{lem}[SPDE representation]
Let $v(t,x)=\E_W\{ f(x+W_t)\int_0^t \dot{\W}(x+W_s)ds\}$, then it solves
\begin{equation}
\partial_t v(t,x)=\frac12\partial_{x}^2v(t,x)+ u(t,x)\dot{\W}(x)
\label{eq:spde}
\end{equation}
with zero initial condition, and the function $u$ solving $\partial_t u=\frac12\partial_{x}^2 u$ with initial condition $u(0,x)=f(x)$.
\label{lem:spde}
\end{lem}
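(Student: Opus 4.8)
## Proof Proposal for Lemma~\ref{lem:spde}

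The plan is to verify the SPDE \eqref{eq:spde} by a direct probabilistic computation, working with the Feynman--Kac-type representation of $v(t,x)$ and differentiating under the expectation. First I would fix a smooth test realization for $\dot{\W}$ (or work with a mollification $\W_\eps$ as is done in the proof of Proposition~\ref{p:spde}), so that all manipulations with $\dot{\W}(x+W_s)$ are legitimate; the white noise case then follows by an $L^2(\Omega)$ approximation argument, since both sides of \eqref{eq:spde} are linear in $\dot{\W}$ and depend on it through $L^2$-bounded functionals (one can invoke Lemma~\ref{lem:localtime} to control the local-time moments that appear when one writes $\int_0^t \dot{\W}(x+W_s)ds = \int_\R L_t^x(y)\,\W(dy)$).

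The main step is to split the time integral at an intermediate time and use the Markov property of Brownian motion. Writing $v(t,x) = \E_W\{f(x+W_t)\int_0^t \dot{\W}(x+W_s)\,ds\}$, I would isolate the contribution of the increment $[0,dt]$: conditioning on $W$ over $[dt,t]$ and using $W_s = W_{dt} + (W_s - W_{dt})$, the term $\int_0^{dt}\dot{\W}(x+W_s)\,ds \approx \dot{\W}(x)\,dt$ produces, after taking expectations, the source term $\E_W\{f(x+W_t)\}\dot{\W}(x)\,dt = u(t,x)\dot{\W}(x)\,dt$, while the remaining piece $\int_{dt}^t \dot{\W}(x+W_s)\,ds$, re-centered, reproduces $v(t-dt, x + W_{dt})$ in expectation. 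Taking $\E_W$ over the increment $W_{dt}$ and expanding to second order gives $v(t,x) = v(t-dt,x) + \tfrac12 \partial_x^2 v(t-dt,x)\,dt + u(t,x)\dot{\W}(x)\,dt + o(dt)$, which is precisely \eqref{eq:spde}. The zero initial condition is immediate since the time integral vanishes at $t=0$. Alternatively, and perhaps more cleanly, I would verify directly that the function
\[
\tilde v(t,x) = \int_0^t \int_\R q_{t-s}(x-y)\,u(s,y)\,\dot{\W}(y)\,dy\,ds
\]
(the Duhamel solution of \eqref{eq:spde}) coincides with $v(t,x)$, by interchanging the $ds$ and $dy$ integrals, using $u(s,y) = \E_W\{f(y+W_s)\}$ together with the Chapman--Kolmogorov identity $\int_\R q_{t-s}(x-y)q_s(y-z)\,dy = q_t(x-z)$, and recognizing $\int_0^t q_{t-s}(x+W_s - y)\,ds$ as the occupation density identity $\int_0^t \delta(x+W_s - y)\,ds$ in expectation.

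The subtle point — the one I expect to be the main obstacle — is making the $dt$-expansion rigorous, i.e. controlling the error terms when the increment of the time integral over $[0,dt]$ is replaced by $\dot{\W}(x)\,dt$ and when the Taylor expansion in the spatial variable is truncated at second order. For smooth $\dot{\W}$ this is a routine regularity estimate (one needs $v(t,\cdot)$ to be $C^2$ in $x$ with locally bounded derivatives, which follows from the smoothing of the heat kernel applied to the $L^2$ data), but uniformity as the mollification parameter $\eps \to 0$ has to be handled through the $L^2(\Omega)$-stability of the Duhamel representation rather than pathwise in $\dot{\W}$. For this reason I would favor the Duhamel-identity route: it reduces the whole statement to the deterministic occupation-time formula for Brownian motion and an application of Fubini (justified by Lemma~\ref{lem:localtime}), avoiding any delicate pathwise stochastic calculus with the singular noise.
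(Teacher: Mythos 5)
Your favored route (the Duhamel-identity verification) is correct, and it differs from the paper's argument in a genuine way. The paper starts from the mild solution $\int_\R\bigl(\int_0^t q_{t-s}(x-y)u(s,y)ds\bigr)\W(dy)$, mollifies the noise by $\W_\eps$, solves the resulting smooth random PDE pathwise via a Feynman--Kac representation (introducing an auxiliary independent Brownian motion $B$ so that $\int_0^t u(t-s,x+W_s)\W_\eps(x+W_s)ds$ collapses to $\E_W\{f(x+W_t)\int_0^t\W_\eps(x+W_s)ds\}$), and then passes to the limit $\eps\to0$ in $L^2(\Omega)$ using the local-time moment bounds of Lemma~\ref{lem:localtime}; mollification is what spares it any interchange of $\E_W$ with the Wiener integral. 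You instead interpret $\int_0^t\dot\W(x+W_s)ds=\int_\R L_t^x(y)\W(dy)$, use a stochastic Fubini (justified by the same Lemma~\ref{lem:localtime}, since $\int_\R\E_W\{|L_t^x(y)|^2\}dy<\infty$) to write $v(t,x)=\int_\R\E_W\{f(x+W_t)L_t^x(y)\}\W(dy)$, and then compute the deterministic kernel: by the occupation-time formula and conditioning at time $s$, $\E_W\{f(x+W_t)L_t^x(y)\}=\int_0^t q_s(y-x)u(t-s,y)ds=\int_0^t q_{t-s}(x-y)u(s,y)ds$, which is exactly the Duhamel kernel, so $v$ equals the mild solution. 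This is shorter and reduces everything to one Brownian identity plus Fubini; the paper's mollification route avoids the stochastic Fubini and dovetails with the structure of the proof of Proposition~\ref{p:spde}. Two small corrections to your write-up: the identity you need is the Markov property of $W$ (i.e. $\E\{f(x+W_t)\mid\mathcal F_s\}=u(t-s,x+W_s)$), not Chapman--Kolmogorov, which would integrate out the variable $y$ carried by the noise, and the expression ``$\int_0^t q_{t-s}(x+W_s-y)ds$'' is not the occupation density (that would be $\int_0^t q_s(y-x)ds$ after taking $\E_W$, and the weight $f(x+W_t)$ is what produces the extra factor $u(t-s,y)$). Your first route, the infinitesimal $dt$-expansion, is only heuristic as you acknowledge, and is best omitted in favor of the Duhamel argument.
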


\begin{proof}
The proof is similar to that of Proposition~\ref{p:spde}. First, we approximate the SPDE with a smooth equation. Then we use the probabilistic representation of the smooth equation and show its convergence.

The solution to \eqref{eq:spde} can be written as
\[
v(t,x)=\int_0^t \int_\R q_{t-s}(x-y)u(s,y)\W(dy)ds=\int_\R \left(\int_0^tq_{t-s}(x-y)u(s,y)ds\right) \W(dy),
\]
and we define $v_\eps(t,x)$ as
\[
v_\eps(t,x)=\int_0^t \int_\R q_{t-s}(x-y)u(s,y)\W_\eps(y)dyds=\int_\R \left(\int_0^tq_{t-s}(x-y)u(s,y)ds\right) \W_\eps(y)dy,
\]
with 
\[
\W_\eps(y)=\int_\R \frac{1}{\eps}h(\frac{x-y}{\eps})\W(dy)
\]
 as a smooth mollification of $\W$. It is clear that $v_\eps(t,x)\to v(t,x)$ in $L^2(\Omega)$ as $\eps\to 0$. 
 
Since $v_\eps$ solves the equation
\[
\partial_t v_\eps=\frac12\partial_x^2 v_\eps+u \W_\eps,
\]
by a probabilistic representation we can rewrite the solution as 
\[
v_\eps(t,x)=\E_W\{ \int_0^t u(t-s,x+W_s)\W_\eps(x+W_s)ds\}.
\]
Since $u$ solves the heat equation with initial condition $u(0,x)=f(x)$, we obtain
\[
\begin{aligned}
v_\eps(t,x)=&\E_W\E_B\{ \int_0^t f(x+W_s+B_{t-s})\W_\eps(x+W_s)ds\}\\
=& \E_W\{ f(x+W_t)\int_0^t \W_\eps(x+W_s)ds\}\\
=& \E_W\{ f(x+W_t)\int_\R \W_\eps(y) L_t^x(y)dy\},
\end{aligned}
\]
where $L_t^x(y)$ is the local time of $x+W_t$.	

By Lemma~\ref{lem:localtime}, for any $p\geq 1$, $\E\{|L_t^x(y)|^p\}$ can be bounded by some integrable function in $y$, and this helps to pass to the limit
\[
v_\eps(t,x)\to \E_W\{ f(x+W_t)\int_\R L_t^x(y)\W(dy)\}=\E_W\{ f(x+W_t)\int_0^t \dot{\W}(x+W_s)ds\}
\]
in $L^2(\Omega)$. The proof is complete.
\end{proof}

\end{document}